\documentclass[a4paper,11pt]{amsart}
\usepackage{amssymb}
\usepackage{graphicx}
\usepackage{psfrag}
\usepackage{amscd}
\usepackage[latin1]{inputenc}
\setlength{\textwidth}{450pt}
\oddsidemargin = 15pt
\evensidemargin = 15pt

\newcommand{\comm}[1]{}

\def\({\left(}
\def\){\right)}
\def\oli{\overline}

\def\raw{\rightarrow}

\def\no={\neq}
\def\sm{\setminus}

\def\C{{\mathbb C}}
\def\D{{\mathbb D}}

\def\R{{\mathbb R}}

\def\Z{{\mathbb Z}}

\def\BB{{\mathcal B}}
\def\CC{{\mathcal C}}

\def\GG{{\mathcal G}}

\def\MM{{\mathcal M}}

\def\OO{{\mathcal O}}

\def\RR{{\mathcal R}}

\def\UU{{\mathcal U}}

\def\al{\alpha}
\def\be{\beta}
\def\ga{\gamma}

\def\vep{\varepsilon}

\def\th{\theta}

\def\la{\lambda}

\def\Th{\Theta}
\def\La{\Lambda}

\theoremstyle{plain}

\newtheorem{Thm}{Theorem}[section]

\newtheorem{Lem}[Thm]{Lemma}

\newtheorem{Cor}[Thm]{Corollary}

\theoremstyle{remark}

\newtheorem{Def}[Thm]{Definition}

\newtheorem*{Prb}{Problem}

\begin{document}

\title[Shared matings in $V_2$]{Shared matings in $V_2$}

\author{Magnus Aspenberg}
\email{magnusa@maths.lth.se }
\address{Lund University, Center for Mathematical Sciences, Box 118, 221 00 Lund, Sweden.}

\maketitle

\begin{abstract}
We give a new constructive method to prove existence of shared matings in the special class $V_2$ consisting of rational maps with a super-attracting $2$-cycle (up to M\"obius conjugacy). The proof does not use Thurston's Theorem on branched coverings on the Riemann sphere. The background to this paper is the master thesis of L. Pedersen \cite{Lars}, where one special shared mating was studied. 
\end{abstract}

\section{Introduction}

The idea of mating was introduced by J. Hubbard and A. Douady to partially parameterize the space of rational maps (of degree $2$ or higher) by pairs of polynomials of the same degree. Let us here first roughly explain the idea. More precise definitions follow. Take two polynomials $f_1(z) = z^2 + c_1$ and $f_2(z) = z^2 + c_2$, where $c_1,c_2$ belong to the Mandelbrot set $\MM$. Suppose that they have locally connected Julia sets. Then try to glue their filled Julia sets together along their boundaries in reverse order. Under good circumstances one obtains a topological object which is homeomorphic to a sphere. If then the dynamics on this sphere can be realized by a rational map $R$ of degree $2$, then  $R$ is called the mating between $f_1$ and $f_2$. We will make a more precise statement about this later. A. Douady and J. Hubbard conjectured the following. 

\begin{Prb}
As long as $c_1$ and $c_2$ do not belong to conjugate limbs of the Mandelbrot set, then $f_1$ and $f_2$ are mateable.
\end{Prb}

For post-critically finite polynomials, the whole problem with mating in degree $2$ has been settled by T. Lei, M. Rees and M. Shishikura, see \cite{TL} and \cite{MS}.  A main ingredient of this result is Thurston's famous Theorem on branched coverings of the Riemann sphere. This paper is an attempt to introduce new techniques of performing a mating of two post-critically finite quadratic polynomials, without the use of Thurston's theorem. We will focus on a specific class of matings, so called shared matings, studied for instance in the master thesis project \cite{Lars} by L. Pedersen. These shared matings are functions in the special class $V_2$, also studied by V. Timorin in \cite{Timorin}, and \cite{AY} et al. We will use some results from \cite{AY}, without however using any puzzle techniques. In \cite{Timorin} V. Timorin proved existence of matings in terms of laminations on the external boundary of the bifurcation locus of $V_2$, a results that partially covers our main result (Theorem \ref{main}). In fact, \cite{AY} also partially covers our result, since post-critically finite maps are not renormalizable. Shared matings in $V_2$ are also described in \cite{Dudko}. Hence the main result is already in the literature, but we use another approach.  

\subsection{Definition of mating}

Consider two quadratic polynomials $f_1(z) = z^2 + c_1$ and $f_2(z) = z^2 + c_2$, with locally connected Julia sets $J_j$ and where $c_1,c_2$ belong to the Mandelbrot set $\MM$, but do not lay in conjugate limbs of $\MM$. Let $K_j$ be their filled Julia sets $j=1,2$. Then there is a conformal map
\[
\Phi_j : \hat{\C} \sm K_j \mapsto \hat{\C}\sm \oli{\D},
\]
where $\D$ is the (open) unit disk. Since $K_j$ is locally connected $\Phi_j^{-1}$ extends to $\partial \D$ continuously. We define {\em external rays}
\[
R_j(t) = \Phi_j^{-1}(\{re^{it} : r > 1 \}), \quad \text{ for } \quad j=1,2.
\]

We now follow the definition according to Milnor, used for example in \cite{Asp-Roesch}.
Let $S^2$ be the unit sphere in $\C\times\R$.
 Identify each complex plane $\C$ containing $K_i$ (dynamical plane of $f_i$), with the northern hemisphere $\mathbb{H}_+$ for $i=1$ and southern hemisphere $\mathbb{H}_-$ for $i=2$, via the gnomic projections,
\[
\nu_1: \C \rightarrow \mathbb{H}_+ \qquad \nu_2:\C \rightarrow \mathbb{H}_- ,
\]
where $\nu_1(z) = (z,1)/\sqrt{|z|^2+1}$ and $\nu_2(z) = (\bar{z},-1)/\sqrt{|z|^2+1}$. This makes $\nu_2$ equal to $\nu_1$ composed with a 180 degree rotation around the $x$-axis.

It is now not hard to check that the ray $\nu_1(R_1(t))$ of angle $t$ in the northern hemisphere land at the point $(e^{2\pi i t},0)$ on the equator (the unit circle in the plane between the hemispheres). Similarly the ray $\nu_2(R_2(-t))$ on the southern hemisphere of angle $-t$ lands at the point $(e^{2\pi i t},0)$ also. The functions $\nu_i \circ f_i \circ \nu_i^{-1}$ from one hemisphere onto itself are well defined. Moreover, if we approach the equator along the two rays with angle $t$ and $-t$ respectively, both maps $\nu_1 \circ f_1 \circ \nu_1^{-1}$ and $\nu_2 \circ f_2 \circ \nu_2^{-1}$ are going to converge to the same map $(z,0) \rightarrow (z^2,0)$ on the equator. Hence we can glue the two maps together along the equator to form a well defined smooth map  from $S^2$ onto itself. This map, denoted by $f_1 \uplus f_2$ is called the {\em formal mating} of $f_1$ and $f_2$.

Define the {\em ray equivalence relation} $\sim_r$ on $S^2$ to be the smallest equivalence relation such that the closure
of the image $\nu_1(R_1(t))$, as well as the closure of
$\nu_2(R_2(-t))$ lies in a single equivalence class.

The question whether the space $S^2/\sim_r$ is still a sphere is answered by Moore's Theorem.

\begin{Thm}[R.L. Moore]
Let $\sim$ be any topologically closed equivalence relation on $S^2$,
with more than one equivalence class and with only connected
equivalence classes. Then $S^2 / \sim$ is homeomorphic to $S^2$ (via some homeomorphism $h$) if and
only if each equivalence class is non separating.
Moreover let $\pi: S^2 \rightarrow S^2 / \sim $ denote the natural projection.
In the positive case above we may choose the homeomorphism
$h: S^2 / \sim \ \rightarrow  S^2$ such that the composite map $h \circ \pi$ is a uniform limit of homeomorphisms.
\end{Thm}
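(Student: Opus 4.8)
The plan is to separate the cheap direction from the substantive one. The \emph{necessity} of the non-separating hypothesis is a short connectedness argument: if some class $g$ disconnects $S^2$ into two nonempty open sets $U_1,U_2$, then every other class, being connected and disjoint from $g$, lies entirely in $U_1$ or in $U_2$, so that $U_1$ and $U_2$ are saturated; hence $\pi(U_1)$ and $\pi(U_2)$ are disjoint, nonempty, open, and cover $(S^2/\sim)\setminus\{\pi(g)\}$. Thus the quotient minus a point is disconnected, which is impossible if $S^2/\sim\cong S^2$. The whole weight of the theorem lies in the \emph{sufficiency} direction, and I would organize it around the Bing shrinking criterion rather than around a direct topological recognition of $S^2$, precisely because the criterion also delivers the ``moreover'' clause for free.

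Setup and reductions. Since $\sim$ is closed as a subset of $S^2\times S^2$ and $S^2$ is compact metric, the decomposition $\GG$ into equivalence classes is upper semicontinuous; consequently the quotient $Y:=S^2/\sim$ is compact and metrizable, $\pi$ is a perfect (closed, compact-fibered) map, and, because every class is connected, $\pi$ is \emph{monotone}. Fix a metric $d_Y$ on $Y$. By the Bing shrinking criterion it suffices to prove that $\GG$ is \emph{shrinkable}: for every $\epsilon>0$ there is a homeomorphism $h\colon S^2\to S^2$ with $\operatorname{diam} h(g)<\epsilon$ for every class $g$ and with $d_Y(\pi h(x),\pi(x))<\epsilon$ for every $x$. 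Shrinkability yields that $\pi$ is a uniform limit of homeomorphisms $S^2\to Y$; in particular $Y\cong S^2$, and composing with a fixed identification produces the homeomorphism of the statement with $h\circ\pi$ a uniform limit of self-homeomorphisms of $S^2$.

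Cellularity. The geometric input is that each non-separating class is \emph{cellular}. If $g$ is a nondegenerate class, then $g$ is a continuum that does not separate $S^2$, so $S^2\setminus g$ is connected; since $g$ is connected, $S^2\setminus g$ is moreover simply connected (Alexander duality). Viewing $S^2=\hat\C$, the domain $S^2\setminus g$ is a simply connected proper subdomain, so by the Riemann mapping theorem it is conformally a disk; pulling the circles $\{|w|=r\}$ back gives a nested sequence of Jordan domains whose closures shrink to $g$. Hence $g=\bigcap_n D_n$ with each $D_n$ an open topological disk and $\overline{D_{n+1}}\subset D_n$, i.e. $g$ is cellular (singletons being trivially so).

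The shrinking step and the main obstacle. Fix $\epsilon>0$. Using upper semicontinuity I would choose, for the classes that are not already $\pi$-small, pairwise disjoint cellular disk neighborhoods $D_g\supset g$ so small that $\pi(D_g)$ has $d_Y$-diameter $<\epsilon$; the Schoenflies theorem then lets me push $g$ into an arbitrarily small subdisk by a homeomorphism supported in $D_g$. Piecing these together across all classes gives the desired $h$. The delicate point, and the main obstacle, is that there may be infinitely many classes of macroscopic spherical diameter accumulating on a limit continuum, so the individual pushes are \emph{not} small in the spherical metric and need not be close to the identity on $S^2$; what saves the construction is that closeness is only demanded after projecting to $Y$, where the quotient metric crushes the neighborhoods $D_g$ of nearby classes together. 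Verifying that the infinitely many supported homeomorphisms assemble into a single self-homeomorphism of $S^2$ (continuity at the accumulation set) and that $d_Y(\pi h,\pi)<\epsilon$ holds uniformly is exactly where two-dimensionality enters, through Schoenflies and the disjointness of the disks $D_g$; this is the heart of the argument and the step I expect to absorb essentially all of the work.
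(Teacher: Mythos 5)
First, a point of order: the paper does not prove this statement at all --- Moore's theorem is quoted as classical background, with no argument given --- so your proposal can only be measured against the standard proofs in the literature, not against ``the paper's proof.''

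Measured that way, your necessity argument is correct, and your reduction of sufficiency to the Bing shrinking criterion, together with the cellularity of non-separating classes via Alexander duality and the Riemann mapping theorem, is the standard modern route (it is also the right way to obtain the ``moreover'' clause, since shrinkability says precisely that $\pi$ is a uniform limit of homeomorphisms). But the proposal stops exactly where Moore's theorem actually lives, and the one concrete plan you do sketch for the shrinking step fails. You propose to choose, for every class that is not already small, pairwise disjoint cellular disk neighbourhoods $D_g \supset g$ and to push each $g$ small by a homeomorphism supported in $D_g$. In general no such disjoint family exists: there may be uncountably many classes of diameter bounded below, accumulating on one another. For example, take a decomposition of $S^2$ whose nondegenerate classes are the arcs $\{c\}\times[0,1]$, $c$ ranging over a Cantor set $C$; this is a closed equivalence relation with connected, non-separating classes, yet any open neighbourhood of the arc $\{c_1\}\times[0,1]$ contains a tube around it, hence contains entire arcs $\{c\}\times[0,1]$ for $c$ near $c_1$, so disjoint saturating neighbourhoods of the large classes cannot be chosen. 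The genuine proof of shrinkability (Bing's argument, or its exposition in Daverman's book on decompositions of manifolds) is not a simultaneous push with disjoint supports: one covers the sphere by finitely many small disks, shrinks finitely many offending classes at a time by homeomorphisms controlled in the \emph{quotient} metric, and iterates, with Schoenflies and planarity entering at each stage; alternatively Moore's original proof avoids shrinking altogether and verifies the topological characterization of $S^2$ (a nondegenerate Peano continuum in which every simple closed curve separates and no arc does), but then the ``moreover'' clause requires a separate argument. You explicitly acknowledge that assembling the pushes and verifying the estimate ``is the heart of the argument'' --- and then leave that heart unproven. As written, your text is an accurate road map of a known proof with its decisive step missing, and with the one announced mechanism for that step (disjoint supports) being unworkable; it is therefore not yet a proof.
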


Note that if $c_1$ and $c_2$ would belong to conjugate limbs of $\MM$, then the $\al$-fixed points $\al_1$ and $\al_2$ for $f_1$ and $f_2$ respectively would be landing points for external rays of conjugate angles; i.e. if $R_1(s)$ lands at $\al_1$, then $R_2(-s)$ lands at $\al_2$. Since each $\al$-fixed point has at least two external rays landing at it, the above mentioned ray equivalence relation would separate the sphere. This makes matings between two such polynomials impossible.

Now we can define conformal mating or simply, mating as follows.

\begin{Def}
The two polynomials $f_1$ and $f_2$ are said {\em conformally mateable}, or just mateable, if
 there exist a rational map $R$ and  two semi-conjugacies $\psi_j: K_j \rightarrow \hat{\C}$ conformal on the interior of $K_j$, such that  $\psi_1(K_1)\cup \psi_2(K_2)=\hat{\C}$ and  \[ \forall (z,w)\in K_i\times K_j, \quad \psi_i(z)= \psi_j(w) \iff z \sim_r w.\]

The rational map  $R$ is called  a {\em conformal mating}.
\end{Def}

\subsection{The family $V_2$.} The family $V_2$ is a space of rational maps which have a super-attracting periodic point of order $2$. Up to M\"obius conjugacy, it is the family of rational maps
\[
R_a(z) = \frac{a}{z^2+2z}, \qquad a \in \hat{\C} \sm \{0 \}
\]
We see that the critical points are $\{\infty,-1\}$. Every $R_a$ has a super-attracting cycle $0 \mapsto \infty \mapsto 0$.
We will study maps in this family for which the so called {\em free} critical point $-1$ is strictly pre-periodic and belongs to the Julia set. We will fix $a$ such that $R_a$ is sub-hyperbolic and sometimes suppress the index for the rest of the paper, writing $R=R_a$.

Let $A_0$ be the immediate basin of attraction for $0$, $A_{\infty}$ be the immediate basin of attraction for $\infty$.
Let $\chi_0: A_0 \mapsto \D$ and $\chi_{\infty} : A_\infty \mapsto \D$ be the corresponding B\"ottcher maps for these domains. Of course these sets depend on the parameter. However, unless needed we will not write out this dependence explicit.

In our specific situation,  rational maps from the family $V_2$ are sometimes matings between the basilica polynomial
$f_{-1}(z) = z^2-1$ and some other polynomial $f_c(z) = z^2 + c$, where $c$ does not belong to the $1/2$-limb of the Mandelbrot set. It is obvious that if $R_a$ is a mating then one of the two mated polynomials has to be the basilica polynomial, since $R_a$ has a super-attracting $2$-cycle for each $a \in \C \sm \{0\}$. We will consider matings where $f_c$ is a so called Misiurewicz-Thurston polynomial; namely a polynomial where the critical point $z=0$ is strictly pre-periodic. The idea is to prove that a sub-class of such polynomials are mateable with the basilica polynomial, by constructing the semi-conjugacy on a dense subset of the filled Julia sets, where the ray-equivalence relation is fulfilled automatically. After that we extend these semi-conjugacies and prove that the extensions are continuous. Finally, a technical lemma (Lemma \ref{biac}) is a main part of proving that the semi-conjugacies obey the ray equivalence relation on the full filled Julia sets.

Let $\RR_c(\th)$ and $\RR_+(\th)$ be the external rays of angle $\th$ for the map $f_{c}$ and $f_{-1}$ respectively. For a given Misiurewicz-Thurston polynomial $f_c$ for which $z=0$ is eventually mapped onto the landing point of the ray $\RR_c(1/3)$, suppose that $\RR_c(\th)$ lands at $c$. Consider another Misiurewicz-Thurston polynomial $f_{c'}$, for which $\RR_{c'}(\th')$ lands on $c'$ where $\RR_+(-\th)$ and $\RR_+(-\th')$ lands on the same bi-accessible point in the basilica. We call $f_c$ and $f_{c'}$ a {\em basilica-pair}. Clearly also $z=0$ is eventually mapped onto $\RR_{c'}(1/3)$ under $f_{c'}$. We have the following result.  
\begin{Thm} \label{main}
Let $f_c$ be a Misiurewicz-Thurston polynomial, such that the critical point $z=0$ is eventually mapped onto the landing point of $\RR_c(1/3)$, and suppose that $c \in \MM$ does not belong to the $1/2$-limb. Then $f_c$ and $f_{-1}$ are conformally mateable. Moreover, if $f_{c'}$ and $f_c$ is a basilica-pair, then the mating $R_a$ between $f_c$ and $f_{-1}$ is also a mating between $f_{c'}$ and $f_{-1}$.
\end{Thm}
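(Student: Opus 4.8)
The plan is to realise the mating concretely as a map in $V_2$, never invoking Thurston's theorem. First I would pin down the candidate rational map $R=R_a$. Its super-attracting $2$-cycle $0\mapsto\infty\mapsto 0$ must correspond to the super-attracting $2$-cycle $0\mapsto -1\mapsto 0$ of the basilica $f_{-1}$, so the Fatou set of $R_a$ (the full basin of $\{0,\infty\}$) is forced to mirror the basin of the $2$-cycle of $f_{-1}$, while the Julia set $J(R_a)$ must receive both $J_{-1}$ and $J_c$. The free critical point $-1$ of $R_a$ is strictly pre-periodic and lands in $J(R_a)$ exactly as the critical point $z=0$ of the Misiurewicz--Thurston polynomial $f_c$ lands in $J_c$; the hypothesis that this orbit eventually hits the landing point of $\RR_c(1/3)$ aligns it with the $\al$-fixed-point rays of the basilica, and I would take from \cite{AY} the existence of a sub-hyperbolic $R_a\in V_2$ whose free critical orbit realises this combinatorics. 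The exclusion of the $1/2$-limb is precisely the Douady--Hubbard non-conjugate-limb condition which, by the remark following Moore's theorem, makes $\sim_r$ non-separating, so that $S^2/\sim_r$ is a sphere.

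With $R_a$ fixed, I build the two semi-conjugacies of the mating definition, say $\psi_{-1}$ on $K_{-1}$ and $\psi_c$ on $K_c=J_c$, first on a dense set where nothing can go wrong. On the interior of $K_{-1}$ (the basin of the $2$-cycle) I use the B\"ottcher coordinates of each bounded Fatou component together with the maps $\chi_0,\chi_\infty$ to map every Fatou component of $f_{-1}$ conformally onto the corresponding Fatou component of $R_a$, matching the two combinatorially identical basin trees; on these open sets $\psi_{-1}$ is injective, so no ray-equivalence identification is created there. On the $f_c$ side, where $J_c$ has empty interior, I instead send the landing point of each external ray $\RR_c(t)$ of pre-periodic angle $t$ --- a dense subset of $J_c$ --- to the point of $J(R_a)$ prescribed by the gluing $\nu_1(R_1(t))\sim_r\nu_2(R_2(-t))$. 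On this dense set the condition $\psi_i(z)=\psi_j(w)\iff z\sim_r w$ holds by construction, since the identifications are dictated purely by the angle combinatorics. I would then extend $\psi_{-1}$ and $\psi_c$ to all of $K_{-1}$ and $J_c$: local connectivity of $J_{-1}$ and $J_c$ (both maps are post-critically finite) lets the B\"ottcher and ray parametrisations extend continuously to the boundary by Carath\'eodory's theorem, and a density argument promotes this to continuous maps whose images together cover $\hat\C$.

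The hard part, and the one I expect to be the main obstacle, is verifying the ray-equivalence condition on the \emph{full} filled Julia sets rather than merely on the dense set: one must show that $\psi_{-1}(z)=\psi_c(w)$ holds if and only if $z\sim_r w$, for \emph{every} pair of points. Continuity alone yields the relation only along the closure of the dense orbit and does not by itself rule out spurious identifications at limit points, nor confirm every genuine one. This is exactly where Lemma~\ref{biac} enters: it controls precisely which points of the Julia sets are bi-accessible, i.e.\ landing points of more than one external ray, and hence exactly which ray-equivalence classes occur. The delicate point is to show that the continuous gluing neither collapses a pair that is not ray-equivalent nor separates points lying in a common ray-equivalence class; the bi-accessibility count, controlled by the combinatorics tied to the angle $1/3$ and the $1/2$-limb exclusion, is what forces the identifications to be exactly those of $\sim_r$.

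Finally, for the shared-mating assertion I would exploit the basilica-pair hypothesis directly. If $(f_{c'},f_c)$ is a basilica-pair, then the angles $\th,\th'$ with $\RR_c(\th)\to c$ and $\RR_{c'}(\th')\to c'$ satisfy that $\RR_+(-\th)$ and $\RR_+(-\th')$ land at one common bi-accessible point of the basilica. Under the mating the free critical value of $R_a$ sits at the image of this basilica point, so the critical value of $f_{c'}$ is glued, through the \emph{same} basilica landing point, to the \emph{same} point of $J(R_a)$ as that of $f_c$; this pins the pre-periodic combinatorics of the free critical orbit and hence forces the same parameter $a$. I would therefore rerun the construction with $f_{c'}$ in place of $f_c$, obtaining a continuous semi-conjugacy $\psi_{c'}$ whose image is the Julia set of this same $R_a$, and apply Lemma~\ref{biac} once more to obtain the ray-equivalence condition. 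Consequently $R_a$ is simultaneously a conformal mating of $f_{-1}$ with $f_c$ and with $f_{c'}$, which is the asserted shared mating.
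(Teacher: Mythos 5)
Your outline for the first half of the theorem (mateability of $f_c$ and $f_{-1}$) follows the paper's route in broad strokes --- fix $R_a$ via the structure of $V_2$ from \cite{AY}, build the basilica-side map bubble by bubble, define $\psi_c$ on pre-periodic points through the angle correspondence $t \mapsto -t$, extend, and use Lemma~\ref{biac} for the ray-equivalence check --- but it rests on a false structural claim. You assert that the basin tree of $f_{-1}$ and the bubble structure of $R_a$ are ``combinatorially identical'' and that the basilica-side map creates no identifications. In fact the free critical point $-1$ of $R_a$ lies in $J(R_a)$ and is a meeting point of \emph{four} bubbles $F_1,F_2,F_1',F_2'$, whereas the corresponding configuration in the basilica consists of \emph{two distinct} bi-accessible points $w_1,w_2$ (the landing points of $\RR_+(-\th_1)$ and $\RR_+(-\th_2)$), each meeting only two bubbles. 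Any semi-conjugacy must therefore fold the basilica: it identifies $w_1$ with $w_2$, and by pullback it identifies the iterated pre-images of these points in pairs according to a lamination. Crucially, this folding can be made in exactly two ways ($\psi_+(B_2)=F_2$ or $\psi_-(B_2)=F_2'$ in the paper's notation), and that dichotomy is the entire mechanism of the shared mating: $\psi_+$ is the basilica-side semi-conjugacy for the mating with $f_c$, and $\psi_-$ is the one for $f_{c'}$.

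Because you treat the basilica-side map as forced and injective, your plan for the ``Moreover'' clause cannot work as stated. If you rerun your construction for $f_{c'}$ with the same (allegedly canonical) basilica map $\psi_+$, the ray-equivalence condition fails already at the first pre-images of the critical point: the four basilica points lying over $\{w_1,w_2\}$ must be identified in pairs, and the pairing demanded by the ray portrait of $f_{c'}$ (the lamination generated by $(\th_1',\th_2')$) differs from the pairing realized by $\psi_+$ (generated by $(\th_1,\th_2)$), so $z\sim_r w$ no longer implies $\psi_{c'}(z)=\psi_+(w)$. The rerun must instead produce the second gluing $\psi_-$, which your setup does not admit. A related problem: your step ``this pins the combinatorics and hence forces the same parameter $a$'' is a rigidity assertion of exactly the Thurston type that the paper is designed to avoid; the paper gets the same $a$ by \emph{construction}, as the touching point of the two parabubbles attached to the bubbles $C_1,C_2$ meeting at $w_0$, a datum depending only on the common basilica point in the basilica-pair definition. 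Two smaller gaps: defining $\psi_c$ at the landing point of $\RR_c(t)$ as ``the point of $J(R_a)$ prescribed by the gluing'' is circular --- the gluing only names a basilica point, and you need that point to be bi-accessible (an iterated pre-image of $\al$, where $\psi_+$ is already defined), which is why the paper restricts to the sets $E$ and $D$ rather than all pre-periodic angles; and ``Carath\'eodory plus a density argument'' does not yield the extension --- well-definedness and continuity of the limits at tips and at multiply accessible points require the expansion estimates (Lemma~\ref{hypmetric}, Corollary~\ref{landing}), Lemma~\ref{tips}, and Lemma~\ref{biac} itself.
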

Such matings as $R_a$ above are called {\em shared matings} meaning that they are matings between two different pairs of polynomials.
We call $K_{-1}$ and the filled Julia set for $f_{-1}$, and let $J(f_{-1})=J_{-1}$ and $J(f_c) = J_c$ be the Julia sets for $f_{-1}$ and $f_c$ respectively.

\subsection*{Acknowledgments}
I would like to thank Wolf Jung for useful comments. 


\section{Construction of $\psi_+$ on a dense subset of $K_{-1}$}

We start in this section with partially defining a semi-conjugacy $\psi_+: K_{-1} \raw \hat{\C}$. Together with another semi-conjugacy $\psi_c: J_c \raw \hat{\C}$ we prove that $R_a$ is indeed a mating between $f_{-1}$ and $f_c$. Since $R_a$ in the end will be a shared mating, it will also be a mating between $f_{-1}$ and $f_{c'}$ and hence there are semi-conjugacies $\psi_- : K_{-1} \raw \hat{\C}$ together with $\psi_{c'}: J_{c'} \raw \hat{\C}$ for this mating. Since the constructions of the semi-conjugacies $\psi_+$ and $\psi_-$ are very similar, we will mainly focus on one of them. In Subsection \ref{psiconstr} we will see more clearly why these two choices arise (the construction of $\psi_c$ and $\psi_{c'}$ are inherited from these).

Recall that the basilica polynomial $f_{-1}(z) = z^2-1$ has a super-attracting cycle of order $2$; $0 \mapsto -1 \mapsto 0$.
Let $\phi_0: B_0 \mapsto \D$ and $\phi_{-1}: B_{-1} \mapsto \D$ be the B\"ottcher maps of the basins $B_0$ and $B_{-1}$ respectively, where $B_0$ contains $z=0$ and $B_{-1}$ contains $z=-1$. First, let $\psi_+$ be defined on the $B_0$ as
\[
\psi_+(z) = \chi_{\infty}^{-1} \circ \phi_0(z)
\]
and similarly for $B_{-1}$,
\[
\psi_+(z) = \chi_{0}^{-1} \circ \phi_{-1}(z)
\]
Since there is a fixed point $\al = \oli{B_0} \cap \oli{B_{-1}}$ and also a fixed point $p_{\al} = \oli{A_0} \cap \oli{A_{\infty}}$ we extend $\psi_+$ to have $\psi_+(\al) = p_\al$.

Now we can define $\psi_+$ by taking pre-images until we hit the critical point for $R_a$, where $a$ is a specific parameter which depends on the map $f_c$. Suppose that the critical point $z=0$ for $f_c(z)$ is strictly pre-periodic such that $f_c^m(0) = \RR_c(1/3)$ for the least possible $m > 0$. Then $R_a$ will also be critically strictly pre-periodic such that $R^m(-1) = p_\al$ for the least possible $m > 0$. Without specifying $R_a$ more than this for the moment (the precise definition is made in Subsection \ref{choiceofa}), we can define $\psi_+$ by taking the first $m-1$ pre-images of $B_0 \cup \{ \al \}$ because the map $\psi_+$ will be injective there. 
To see this, let 
\begin{equation} \label{L0}
L_{n} = \bigcup_{k=0}^{n} f_{-1}^{-k} (B_0 \cup \{ \al \})
\end{equation}
and suppose we already have defined a conjugacy $\psi_+$, mapping $L_n$ onto its image $\psi_+(L_n)$, for some $n < m-1$. By pulling back $L_n$ under $f_{-1}$, we get that $L_{n+1} \sm L_n$  consists of finitely many new Fatou components (iterated pre-images of $B_0$) that are attached to the previous set $L_n$ via a unique iterated pre-image of the fixed point $\al$. The same is true for the map $R_a$; since $n < m-1$, the pre-image of the set $\psi_+(L_{n+1} \sm L_n)$ under $R_a$ consists of equally many Fatou components that are attached to $\psi_+(L_n)$ via a unique iterated pre-image of $p_{\al}$. Clearly there is only one choice of $\psi_+$ that (continuously) conjugates $L_{n+1}$ with its new image. We can continue pulling back until we hit the critical value, i.e. there is an injective conjugacy
\begin{equation} \label{conjL0}
\psi_+: L_{m-1}  \longrightarrow  \bigcup_{n=0}^{m-1} R_a^{-n}(A_\infty \cup \{p_{\al} \}).
\end{equation}

Now, the critical value of $R_a(-1) = -a$ must belong to $\psi_+(L_{m-1})$, but $-1 \notin \psi_+(L_{m-1})$. Let $w_0$ be defined by $\psi_+(w_0) = -a$. In the next step
there will be two points (pre-images of $w_0$) say $w_1$ and $w_2$ which will forced to be mapped onto the same point, namely $-1$ under $\psi_+$. Since both $w_1$ and $w_2$ are bi-accessible, there are four disjoint bubbles, $B_1, B_1', B_2, B_2'$, such that $\oli{B_1} \cap \oli{B_2} = w_1$ and $w_2 = \oli{B_1'} \cap \oli{B_2'}$.
Let $f_{-1}(B_1) = f_{-1}(B_1') = C_1$ and $f_{-1}(B_2) = f_{-1}(B_2') = C_2$. Hence $\oli{C_1} \cap \oli{C_2} = w_0$. We may fix these bubbles $B_1, B_2, B_1', B_2', C_1, C_2$ by saying that $C_1$ has strictly lower generation than $C_2$ (which means that $B_1$ and $B_1'$ also have lower generation than $B_2$ and $B_2'$ respectively). Moreover, since $C_1$ has strictly lower generation than $C_2$ it means that $B_1$ and $B_1'$ belong to $L_0$.

\subsection{Choosing the right $R_a$} \label{choiceofa}
Given $f_c$ we have to find a candidate rational function $R_a$ that is the mating between $f_{-1}$ and $f_c$. Suppose that $z=0$ is the landing point of two rays $\RR_c(\th_1)$ and $\RR_c(\th_2)$ for $f_c$. By assumption $f_c$ is strictly pre-periodic and there is a (least) $m  > 0$ such that $f_c^m(0)$ is a landing point for the ray $\RR_c(1/3)$. This means that $2^m \th_1 \equiv 2^m\th_2 \equiv  1/3$ $ (\mod 1)$. Since $z=0$ is critical, $f_c(0)$ is the landing point of only one ray $\RR_c(2\th_1) = \RR_c(2 \th_2)$, since $2 \th_1 \equiv 2 \th_2$.

We want to find a rational map in the family $V_2$ which has the property that
the critical point $-1$ is strictly pre-periodic and $R^m(-1)$ is the (unique) fixed point where $A_0$ and $A_\infty$ touch, and moreover such that the Fatou set for $R_a$ in some sense has the same topology as the basilica with some identifications. To do this we need some fact about the parameter space of $R_a$.

\begin{Def}
A {\em parabubble} $\BB$ is a simply connected component of parameters $a$ for which $R_a^n(a) \in A_\infty$, for some fixed $n \geq 0$.
\end{Def}

According to \cite{AY}, there is a correspondence between the right half of the basilica and the set of parabubbles. More precisely, the set $K_{-1} \sm \al$ is the union of two connected components. Let $\BB_R$ be the component that contains $z=0$. 
From Lemma 5.9 in \cite{AY} we see that for each bubble $B \in \BB_R$, there is a corresponding parabubble $P$, which is simply connected, such that $a \in P$ if and only if $-a \in B$ (such parabubbles are called capture components). 

Now suppose that $w_0$ is the point where the ray $\RR_+(-2\th_1)$  lands in the basilica (recall that $\RR_+(-2\th_2)$ is the same ray since $2 \th_1 \equiv 2 \th_2$). There are precisely two bubbles $C_1$ and $C_2$ which touch at $w_0$. We have before chosen these such that $C_1$ has lower generation than $C_2$. By Lemma 5.9 in \cite{AY} there are two corresponding parabubbles $P$ and $P'$ such that $a \in P$ implies that the critical value $-a \in \psi_+(C_1)$ and likewise, $a \in P'$ implies $-a \in \psi_+(C_2)$. In fact, $P$ is the predecessor to $P'$ (see definition after Proposition 5.10 in \cite{AY}) and hence $P$ and $P'$ touch at exactly one point, $a \in \partial P \cap \partial P'$. Now, Lemma 5.9 (III) shows that $-a \in \partial \psi_+(C_1) \cap \partial \psi_+(C_2)$. Clearly, $R_a$ is a Misiurewicz-Thurston map with $R_a^m(-1)=p_{\al}$. 

Fix this parameter $a$ for the rest of the paper. 
We will mainly follow from a thorough study of the parameter space of $V_2$, together with the description given in \cite{AY} to inductively construct $\psi_+$. As seen before, it is clear that there is an injective conjugacy on $L_{m-1}$ by (\ref{conjL0}).
To continue this conjugacy by pulling back, the critical point $-1$ will then be an image of the {\em two} pre-images of the corresponding point in the basilica. Hence these two points glue together in the image in a certain orientation preserving way. The main point is that this gluing can be made in two ways.

\subsection{Two choices for the construction of $\psi_+$} \label{psiconstr}
Write $R_a=R$, where $a$ is chosen in the previous section. Since $R$ is a degree $2$ map the point $-1$ meets four disjoint bubbles ordered as $\{F_1, F_2, F_1', F_2'\}$ clockwise,  which are mapped in pairs onto two bubbles $G_1$ and $G_2$. Since $-1$ is critical of order $2$, $R$ maps two opposite bubbles $F_j$ and $F_j'$ onto the same bubble, i.e. $R(F_1)=R(F_1')=G_1$ and $R(F_2)=R(F_2')=G_2$, where $G_1 \neq G_2$. Moreover, from the previous section, we have that $R$ is strictly pre-periodic, such that $R^m(-1)=p_{\al}$. Since  $\psi_+$ is a conjugacy on $L_{m-1}$ we know that $\psi_+(C_1)$ and $\psi_+(C_2)$ are determined from the construction on $L_{m-1}$ above. Let us say that $\psi_+(C_j) = G_j$, $j=1,2$. By the fact that $B_1, B_1'$ belong to $L_{m-1}$, we may further assume that $\psi_+(B_1) = F_1$ and $\psi_+(B_1')=F_1'$. However, there is still some freedom in how to define  $\psi_+(B_2)$ and $\psi_+(B_2')$. Either $\psi_+(B_2)=F_2$ or $\psi_-(B_2) =F_2'$ (for the latter choice we change the notation from $\psi_+$ to $\psi_-$). As we shall see these two choices represent the two semi-conjugacies for the  basilica-pair $f_c$ and $f_{c'}$, defined just before Theorem \ref{main}.

\begin{figure}
 \setlength{\unitlength}{0.1\textwidth}
\centering
   \begin{picture}(10,6)
  \put(3,0){\includegraphics[scale=1.0]{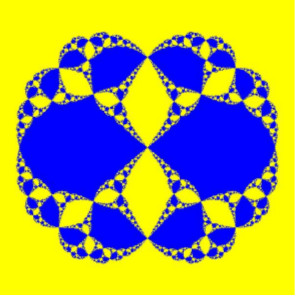}}
  \put(4,2.3){$F_1'$}
  \put(6.4,2.3){$F_1$}
  \put(5.2,3){$F_2'$}
  \put(5.2,1.7){$F_2$}
   \end{picture}
 \caption{Julia set of $R_3$. The middle point is the critical point $z=-1$. It touches the four bubbles $F_1, F_2, F_1', F_2'$. In this picture we also have $A_0 = F_1$ which touches $A_{\infty}$ at $z=1$.}
  \label{R3}
\end{figure}



Let us for now fix one such choice, $\psi_+(B_2) = F_2$. No other choices can be made and we get a semi-conjugacy 
\[
\psi_+: \bigcup_{n=0}^{\infty} f_{-1}^{-n} (B_0 \cup \{\al\}) \longrightarrow \bigcup_{n=0}^{\infty} R_a^{-n}(A_{\infty} \cup \{ p_{\al} \})
\]
which is injective on all bubbles in the basilica but two-to-one at certain iterated pre-images of $\al$. Hence the basilica is
mapped under $\psi_+$ into the Riemann sphere with some identifications.

Since $w_1$ and $w_2$ are bi-accessible, there are pairs of external rays landing at them; let us say that $\RR_+(\eta_1)$ and $\RR_+(\eta_1')$ land at $w_1$, and $\RR_+(\eta_2)$ and $\RR_+(\eta_2')$ land at $w_2$. Since $\RR_c(\th_1)$ and $\RR_c(\th_2)$ land on $z=0$ in $J_c$ we may put $\eta_1=-\th_1$ and $\eta_2 = -\th_2$. Suppose that $0 < \eta_2 < \eta_1 < \eta_1' < \eta_2' < 1$, so that the order is fixed. From the choice of $\psi_+$ it also gives a way of gluing together $w_1$ and $w_2$ under $\psi_+$. 
By the above construction we can see that the bubble pairs $B_1 \cup B_2$ and $B_1' \cup B_2'$ are mapped onto the four bubbles $F_1 \cup F_2 \cup F_1' \cup F_2'$ in the written order, i.e. $\psi_+(B_i) = F_i$ and $\psi_+(B_i') = F_i'$, $i=1,2$. 
There are four different non-homotopic paths in $\hat{\C} \sm K_{-1}$ connecting $w_1$ with $w_2$. If we extend $\psi_+$ to one representative $h$ of these paths (take e.g a leaf of
the outside lamination) so that $\psi_+$ maps the full path including the points $w_1$ and $w_2$ onto $-1$ then we get
\[
\psi_+(B_1 \cup B_2 \cup B_1' \cup B_2' \cup \{w_1 \} \cup \{ w_2 \} \cup h)= (F_1 \cup F_2 \cup F_1' \cup F_2' \cup \{ -1 \} )
\]
It is clear that only one of the four paths can make $\psi_+$ satisfy the following additional condition: 
We demand that $\psi_+$ is the uniform limit of a sequence $\psi_n$ of continuous functions on $B_1 \cup B_2 \cup B_1' \cup B_2' \cup \{w_1 \} \cup \{ w_2 \} \cup \{ h \}$, such that the functions $\psi_n$ are injective on $B_1 \cup B_2 \cup B_1' \cup B_2' \cup \{w_1 \} \cup \{ w_2 \}$ (so that the image of $h \cup \{ w_1 \} \cup \{ w_2 \}$ eventually collapses to a point). Hence in a sense the bubble pairs $B_1 \cup B_2$ and $B_1' \cup B_2'$ get glued together in a certain way, via the path $h$. Now fix this path $h$ connecting $w_1$ and $w_2$.

\begin{figure}
   \setlength{\unitlength}{0.1\textwidth}
\begin{center}
   \begin{picture}(10,5)
     \put(1,0){\includegraphics[scale=0.3]{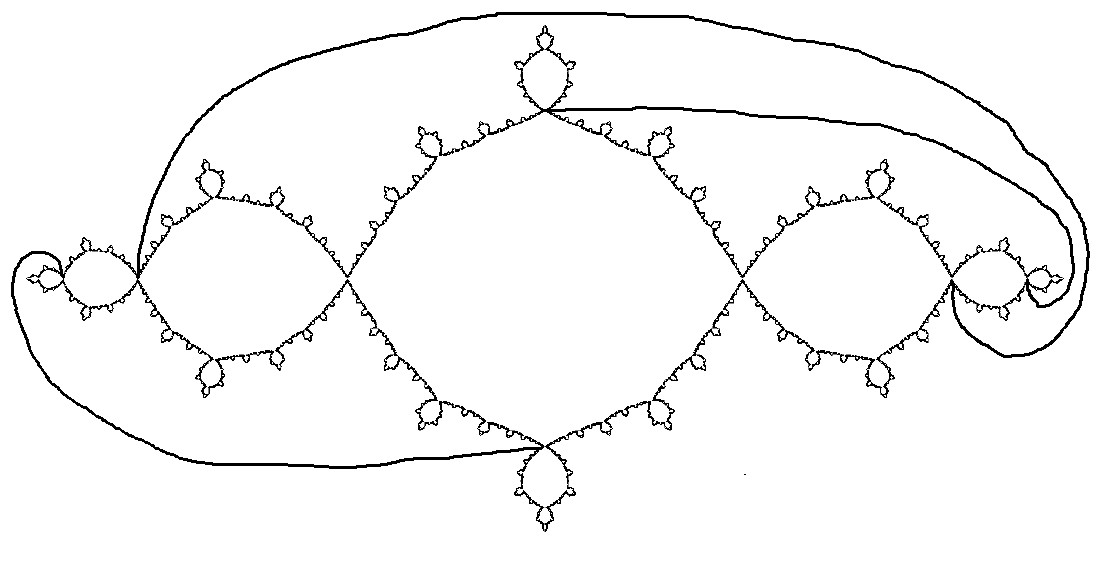}}
     \put(2.5,3.4){h}
    \put(3.3,3.8){$(\frac{5}{12},\frac{11}{12})$}
    \put(2.5,0.38){$(\frac{11}{24},\frac{17}{24})$}
    \put(6,3.1){$(\frac{5}{24},\frac{23}{24})$}
\put(2,1.8){$w_1$}
\put(6.95,1.8){$w_2$}
\put(6.5,1.8){$B_1'$}
\put(2.5,1.8){$B_1$}
     \put(1.0,2.5){\vector(1,-1){0.6}}
\put(0.9,2.6){$B_2$}
     \put(8.2,1.3){\vector(-1,1){0.6}}
\put(8.3,1.2){$B_2'$}
   \end{picture}
\end{center}
 \caption{The path $h$ connects $w_1$ with $w_2$, in this picture such that $h$ lands on the same side as the external rays with angle $\eta_1=5/12$ and $\eta_2=11/12$ respectively. The two other angle pairs are pullbacks of the angle pair $(5/12,11/12)$ corresponding to the outside lamination. The bubbles $B_1,B_2, B_1',B_2'$ are also indicated. They get mapped onto $F_1,F_2, F_1',F_2'$ for the mating, which is $R_3$ in this picture. }
  \label{Basilica-rays}
\end{figure}

A {\em bubble ray} in $\mathring{K}$ is a sequence of (distinct) Fatou components $F_j$ such that  $\overline{F_j} \cap \overline{F_{j+1}} = x_j$ is always a single point. A {\em bubble} is simply a Fatou component (either from $R_a$ or $f_{-1}$). We also define the {\em axis} of a bubble ray $\BB = \{F_k \}$ as the union $\cup \ga_k$ of where each $\ga_k$ is the union of two internal rays in $F_k$ which land on $x_k$ and $x_{k-1}$ (hence $\ga_k$ is a curve inside $F_k$ connecting $x_{k-1}$ with $x_k$). Parameterizing the axis of $\BB$ by an injective function $\ga(t)$, where $t \in [0,\infty)$ with the property that if $\ga(t_1) \in F_k$ and $\ga(t_2) \in F_{k+1}$ then necessarily $t_1 < t_2$, we get a natural way of defining {\em right} and {\em left} of the axis for points in a neighbourhood of the axis if we go along the axis with increasing $t$ for this parameterization. Take a small closed ball $B$ around some $x_j = \overline{F_j} \cap \overline{F_{j+1}}$ such that $B$ only intersects the axis in $\overline{F_j} \cup \overline{F_{j+1}}$. Then the set $B \sm (\overline{F_j} \cup \overline{F_{j+1}})$ consists of two disjoint sets $L$ and $R$. We define $L$ and $R$ as follows. Let $x_1$ be the point $\partial B \cap \ga_{k-1}$ and $x_2$ the point $\partial B \cap \ga_k$ (these points are unique if $B$ is sufficiently small). Take any point $y$ on the boundary of $B$ different from $x_1$ and $x_2$. If the triple $\{x_1,x_2,y \}$ forms a triangle where the corners in this order go counter clockwise we say that $y \in L$ and if the order is clockwise we say that $y \in R$. If an external ray lands at $x_j$ then we say that it lands {\em on the left side of $x_j$} if the ray intersects $L$ and not $R$ and vice versa for any sufficiently small ball $B$.

It is now easy to see that the path $h$ connecting $w_1$ and $w_2$ lands at the same side of $w_j$ as the ray of angle $\eta_j$ does, $j=1,2$. If we consider the other choice for the semi-conjugacy, where $\psi_-(B_2)=F_2'$ instead, corresponding to the shared mating between $f_{c'}$ and $f_{-1}$ (where $f_c$ and $f_{c'}$ is a basilica-pair), we get that $h$ lands on the same side of $w_j$ as the ray of angle $\eta_j'$ does, $j=1,2$. Hence the path $h$ would still connect $w_1$ and $w_2$ but from different {\em sides}. Note that $f_{c'}$ has two rays $\RR_{c'}(\th_1')$ and $\RR_{c'}(\th_2')$ landing on the critical point $z=0$, whereas for $f_c$ these rays are $\RR_c(\th_1)$ and $\RR_c(\th_2)$. We will not further consider the mating with $f_{c'}$ , since the construction for $f_c$ is completely analogous.

\subsection{Laminations for the basilica}
We say that $L_{\OO}$ is the (outside) lamination in $S^1$ defined by connecting the rays $\th_1$ with $\th_2$ (or $\th_1'$ and $\th_2'$ if considering the other shared mating), and taking pre-images under angle doubling. From the previous section, each leaf in $L_{\OO}$ corresponds 
to an identification of two points in the basilica in the following way: First we say that $\th \sim_{L_\OO} \th'$ if $\th$ and $\th'$ belong to the same leaf in $L_{\OO}$. Suppose that the two rays $\RR_+(\th)$ and $\RR_+(\th')$ land at two points $w$ and $w'$ respectively. Then $\psi_+(w) = \psi_+(w')$ if $\th \sim_{L_{\OO}} \th'$. The converse is not true.
Each $w$, which is a landing point of a ray with angle $\th$ belonging to some leaf of $L_\OO$, has to be bi-accessible. Hence there are two angles $\th$ and $\th_1$ such that the rays
$\RR_+(\th)$ and $\RR_+(\th_1)$ land at the same point $w$. Let $\sim_{Biac}$ be the equivalence relation such that $\th \sim_{Biac} \th_1$ if and only if $\RR_+(\th)$ and $\RR_+(\th_1)$ land at
the same point. Clearly this equivalence relation generates in the obvious way the ``ordinary'' basilica lamination.





\section{Construction of $\psi_c$ on a dense subset of $J(f_c)$}

We begin with defining $\psi_2$ which we call $\psi_c$ on a dense subset of the Julia set of $f_c(z) = z^2  + c$. After that we will extend $\psi_c$ to the
full Julia set. Since $f_c$ is a Misiurewicz-Thurston map, $c$ is a landing point of an external ray $\RR_c(\be)$ of a strictly pre-periodic angle 
$\be = p/q$. We have chosen it such that $\be 2^{m-1} = 1/3$ for some $m > 0$. For example
if $f_c(z) = z^2 + i$ we have that $\be = 1/6$.  Let $\th_1$ and $\th_2$ be the angles of the external rays landing on the critical point $z=0$.

\subsection{Construction of the semiconjugacy on a dense subset of the Julia set}
Let
\[
E = \bigcup_{n=0}^{\infty} f_c^{-n}(0).
\]
Since the critical value  is uni-accessible all points in $E$ are
bi-accessible. Recall that the ray $\RR_c(2\th_1) = \RR_c(2\th_2)$ lands on $c$ and that the rays
$\RR_c(\th_1)$ and $\RR_c(\th_2)$ land at $0$. The corresponding rays $\RR_+(-\th_1)$ and $\RR_+(-\th_2)$ for the basilica land on two distinct bi-accessible points $w_1$ and $w_2$. From the previous section we know that $\psi_+(w_1) = \psi_+(w_2)$ and hence it is natural to define
\[
 \psi_c(0) = \psi_+(w_1) = \psi_+(w_2).
\]
Moreover, this directly makes $z=0 \in J(f_i)$ ray equivalent to $w_1$ and $w_2$. We want to extend $\psi_i$ to $E$, by taking pre-images.

Now, $w_1$ and $w_2$ have two pre-images each under $f_{-1}$. Let us denote these pre-images by $w_{11}, w_{12}, w_{21}, w_{22}$. They are identified in pairs under $\psi_+$ according to the
lamination $L_{\OO}$. Suppose that
$\psi_+(w_{12})=\psi_+(w_{11})$ and $\psi_+(w_{21}) = \psi_+(w_{22})$ and let $z_1$ and $z_2$ be the pre-images to $0$ under
$f_c$. The ray-pairs $\RR_c(\th_{j1})$ and $\RR_c(\th_{j2})$ landing on $z_j$ correspond to a ray pair $\RR_+(-\th_{j1})$ and $\RR_+(-\th_{j2})$ landing on $w_{j1}$ and $w_{j2}$ respectively, for $j=1,2$.
There is only one natural way to define $\psi_c(z_1)$ in order to preserve the ray equivalence relation between the pre-images, namely
\begin{align}
 \psi_c(z_1) &= \psi_+(w_{11}) = \psi_+(w_{12}) \\
 \psi_c(z_2) &= \psi_+(w_{21}) = \psi_+(w_{22}).
\end{align}
Continuing in the same manner we obtain a map $\psi_c$ defined on $E$. If we put
\[
E' = E \cup \bigcup_{n=1}^m f_c^n(0)
\]
then we easily to extend $\psi_c$ to $E'$ via the map $\psi_+$. Every point $z$ in the forward orbit of $0$ has landing angles $\th$ for which the landing point $w$ of the ray $\RR_+(-\th)$ is a bi-accessible point. Since $\psi_+(w)$ is well defined from before, we can define $\psi_c$ on these points by letting
\[
\psi_c(z) = \psi_+(w).
\]
Let $p_1$ be the landing point of the ray $\RR_c(1/3)$ and $p_2$ the landing point of the ray $\RR_c(2/3)$. To fix the ideas, let us assume that the critical point $z=0$ is mapped under $f_c$ onto $p_1$ {\em before} $p_2$, meaning that $f_c^{n_j}(0)=p_j$ for the least possible $n_j$, $j= 1,2$ then $n_1 < n_2$. Since the point $p_{\al}=R^m(-1)$ is a fixed point, i.e. $R(p_{\al}) = p_{\al}$, we have to have
\[
 \psi_c(p_1) = \psi_c(p_2)=p_{\al}.
\]

From the construction we see that if $z \in E'$ and $w \in \cup_{n=0}^{\infty} f_{-1}^{-n}(\al)$ (i.e. $w$ is bi-accessible) then $z \sim_r w$ if and only if $\psi_c(z) = \psi_+(w)$.  Moreover, it also follows that if
$w_1, w_2 \in L$ and $w_1 \in \partial \RR_+(\th)$, $w_2 \in \partial \RR_+(\th')$ where $\th \sim_{L_{\OO}} \th'$ then $w_1 \sim_r w_2$.

We no extend $\psi_c$ to
\[
 D =  \bigcup_{n=0}^{\infty} f_c^{-n}(p_1).
\]
Note that $E' \subset D$, so we will concentrate on $D \setminus E'$. Since the critical point $z=0$ is mapped under $f_c$ onto $p_1$ {\em before} $p_2$, one of the two pre-images of $p_1$ is equal to $f_c^{m-1}(0)$, which has to be the landing point of the ray $\RR_c(1/6)$. Now, $p_2$ has two pre-images, namely $p_1$ and another one, say $q$, which is uni-accessible and has to be the landing point of the ray $\RR_c(5/6)$. The ray $\RR_+(-5/6)$ is bi-accessible and lands on some point $r$, so $\psi_+(r)$ is well defined from before. Thus define
\[
\psi_c(q) = \psi_+(r). 
\]
Note that we have already set $\psi_c(f_c^{m-1}(0)) = \psi_+(r)$ from before. Hence we have defined $\psi_c$ on the set $U_1 = f_c^{-1}(p_1) \cup f_c^{-1}(p_2)$.  Now we take pre-images. Let us proceed inductively. Note that all points on $D \sm E'$ are uni-accessible. Suppose we have constructed $\psi_c$ on $U_{k-1}$ where $$U_k = \cup_{j=0}^k f_c^{-j}(p_1) \cup f_c^{-j}(p_2).$$ 
If $k < m$ it means that the set $U_k$ does not contain the critical point. Every point $q_j$ in $U_k \sm U_{k-1}$, $j = 1, \ldots, n_k$, (where $n_k=2^k$ if $k < m$ and $n_k = 2^k-2^{k-m}$ if $k \geq m$) is landing point of a ray $\RR_c(\th_j)$, where $\th_j$ is a pre-image of $1/3$ or $2/3$ under angle doubling. It can easily be seen (by induction for example) that the corresponding set $\Th_k$ of angles $\th_j$ for the points $q_j$ has the property that they can be ordered in pairs $(\th_j,\th_j')$ such that $\RR_+(-\th_j)$ and $\RR_+(-\th_j')$ both land on the same bi-accessible point $r_j$ in the basilica. This means that we have to define 
\begin{equation} \label{ident1}
\psi_c(q_j) = \psi_c(q_j') = \psi_+(r_j),
\end{equation}
where $q_j'$ is the landing point of the ray $\RR_c(\th_j')$. 

We recall that there are two rays $\RR_c(\th_1)$ and $\RR_c(\th_2)$ landing on $z=0$ and where $2\th_1 \equiv 2 \th_2$, i.e. $\RR_c(2\th_1) =\RR_c(2\th_2)$ which lands on $c$. Hence the angle-pairs in $\Th_k$ which give the identification (\ref{ident1}) give rise to more identifications on the set $E$, i.e. the iterated pre-images of the critical point, as described above.

\comm{

Both $q$ and $f_c^{m-1}(0)$ has two pre-images each. Since $\RR_c(1/6)$ lands on $f_c^{m-1}(0)$, then $f_c^{m-2}(0)$ is the landing point of the ray $\RR_c(1/12)$ or $\RR_c(7/12)$. Suppose it is $\RR_c(1/12)$. Now, $\psi_c$ is already defined on $f_c^{m-2}(0)$ so we now consider the point $r_1$ which is the landing point of the ray $\RR_+(-1/12)$. It is bi-accessible, and hence $\psi_+(r_1)$ is well defined. Moreover, $\RR_+(1/12)$ also lands on $r_1$, so the landing point $q_1$ of the ray $\RR_c(11/12)$ is ray-equivalent to $f_c^{m-2}(0)$. We are forced to put
\[
 \psi_c(f_c^{m-2}(0)) = \psi_c(q_1) = \psi_+(r_1).
\]
In a similar fashion, let $q_2$ and $q_3$ be the landing points of the rays $\RR_c(7/12)$ and $\RR_c(5/12)$ respectively, and let $r_2$ be the landing point of the ray $\RR_+(-5/12)$. Since $r_2$ is bi-accessible, it is also a landing point of the ray $\RR_+(-7/12)$. Thus we define
\[
\psi_c(q_2) = \psi_c(q_3) = \psi_+(r_2).
\]
Note moreover that $q_2 \sim_r q_3$ and $q_1 \sim_r f_c^{m-2}(0)$, by construction. We can continue in the same manner until we hit the critical value. From this construction we see that two points $w$ and $w'$ in $D \sm E$ will be mapped onto the same point if and only if they are landing points of the rays $R_i(\th)$ and $R_i(\th')$ respectively where $\th \sim_{L_{\BB}} \th'$.

}
We have constructed $\psi_c$ via $\psi_+$ on the full set $D$. Note also that from the construction we have immediately that two points $z,w \in D$ are ray-equivalent if and only if $\psi_c(z) = \psi_c(w)$. Put
\[
\BB iac = \bigcup_{n=0}^{\infty} f_{-1}^{-n}(\al),
\]
i.e. $\BB iac$ is the set of all bi-accessible points in the basilica. Then for any $z \in D$ and $w \in \BB iac$ we have
that $z \sim_r w$ if and only if $\psi_c(z) = \psi_+(w)$.

Hence we now have two semi-conjugacies which obey the ray-equivalence relation on a dense subsets $\BB iac \subset J(f_{-1})$ and $D \subset J(f_c)$. We now want to extend these functions to the full filled Julia sets.

\section{Extension of $\psi_+$ and $\psi_c$}

From the standard theory of iteration of critically finite rational maps,  there exists an expanding hyperbolic metric outside the post-critical set, if it consists of at least $3$ points (which is the case for $R_a$ in this paper). In fact there is an orbifold metric $\varphi(z)$, and some $\La > 1$ such that for any $z \in \hat{\C} \sm P(R)$ we have
\[
\varphi(R(z)) |R'(z)| \geq \La \varphi(z).
\]
As a consequence of this we get
\begin{Lem} \label{hypmetric}
There exists constants $C > 0$ and $0 < \la < 1$ such that for any connected compact set $E \subset \hat{\C} \sm P(R)$ we have that if $F$ is any component of $R^{-n}(E)$ then
\[
diam (F) \leq C \la^n.
\]
\end{Lem}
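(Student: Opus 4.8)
The plan is to use the expanding orbifold metric $\varphi$ directly: since $R$ multiplies $\varphi$-length by at least $\La>1$, each branch of $R^{-1}$ contracts $\varphi$-length by at least $\La^{-1}$, so pullbacks of a fixed set shrink geometrically in the orbifold metric, and one then transfers this to the spherical metric. Concretely, for a rectifiable arc $\ga$ I write $\ell_\varphi(\ga)=\int_\ga \varphi(z)\,|dz|$ and let $d_\varphi$ be the induced distance on $\hat\C\sm P(R)$. The expanding inequality $\varphi(R(z))|R'(z)|\ge\La\varphi(z)$ gives, by the change of variables $w=R(z)$, that every component of $R^{-1}(\ga)$ has $\varphi$-length at most $\La^{-1}\ell_\varphi(\ga)$; iterating, every component of $R^{-n}(\ga)$ has $\varphi$-length at most $\La^{-n}\ell_\varphi(\ga)$.

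First I would record the comparison between the two metrics. The ratio $\varphi/\sigma$, where $\sigma$ is the spherical density, is continuous and strictly positive on $\hat\C\sm P(R)$ and tends to $+\infty$ at each point of $P(R)$ (the orbifold metric has a cusp at the super-attracting cycle and a cone singularity at the other post-critical points); hence it attains a positive minimum $c_0>0$, so $\varphi\ge c_0\,\sigma$ everywhere on $\hat\C\sm P(R)$. Consequently $d_{sph}\le c_0^{-1}d_\varphi$, and it suffices to bound the $\varphi$-diameter of $F$ by $\const\cdot\La^{-n}$.

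Now fix a component $F$ of $R^{-n}(E)$. Because $E\subset\hat\C\sm P(R)$ and $P(R)$ contains every critical value of every iterate $R^k$, the set $E$ meets no critical value of $R^n$, so $R^n\colon F\raw E$ is an unramified proper covering. If $E$ is simply connected this covering is trivial, i.e. $R^n|_F$ is a homeomorphism; then for $a,b\in F$ I would join $R^n(a)$ to $R^n(b)$ by an arc $\ga\subset E$ with $\ell_\varphi(\ga)\le\operatorname{diam}_\varphi(E)$, lift it through the homeomorphism to the arc in $F$ from $a$ to $b$, and conclude $d_\varphi(a,b)\le\La^{-n}\operatorname{diam}_\varphi(E)$. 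Combining with the metric comparison gives $\operatorname{diam}(F)\le c_0^{-1}\La^{-n}\operatorname{diam}_\varphi(E)$, and since $E$ is a fixed compact subset of $\hat\C\sm P(R)$ its $\varphi$-diameter is finite, so absorbing it and $c_0^{-1}$ into a constant and setting $\la=\La^{-1}$ yields the claim.

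The step I expect to be the main obstacle is obtaining a constant $C$ that is genuinely uniform over \emph{all} connected compact $E$, and this is where two possible degeneracies interact. First, a general connected compact $E$ need not be simply connected, so the covering $R^n|_F$ may have degree $>1$ and $F$ may fail to be a univalent image of $E$; a lift of an arc in $E$ then need not return to the prescribed endpoint, and the naive bound must be replaced by a chaining argument over a cover of $E$ by small orbifold balls on which the relevant inverse branches are univalent (such a scale exists by sub-hyperbolicity). Second, $\operatorname{diam}_\varphi(E)$ is not bounded uniformly: it blows up precisely when $E$ approaches $P(R)$, where $\varphi$ is singular. What rescues uniformity is the completeness of the orbifold metric together with the cusp/cone geometry near $P(R)$: a set of large orbifold diameter sitting close to $P(R)$ has its preimages trapped inside the correspondingly small (in the spherical metric) iterated preimages of neighbourhoods of $P(R)$, so they still contract. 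Making this quantitative --- controlling the transfer from orbifold to spherical diameter uniformly near $P(R)$ --- is the technical heart of the argument; once it is in place, the factor $\La^{-n}$ together with the universal spherical bound $\operatorname{diam}(\hat\C)$ on the size of $E$ combine to give $\operatorname{diam}(F)\le C\la^n$ with $C$ and $\la=\La^{-1}$ independent of $E$.
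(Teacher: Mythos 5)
The first thing to note is that the paper contains no proof of this lemma at all: it is asserted as an immediate consequence of the displayed expansion inequality, so your proposal is an attempt to supply a missing argument rather than to reproduce one. Your mechanism is the natural (and clearly intended) one, and the part you actually carry out --- lifting arcs under the unramified covering $R^n\colon R^{-n}(E)\raw E$ when that covering is trivial --- is essentially correct after one repair: a connected compact set need not contain a single rectifiable arc, and even when it does, the infimal length of arcs \emph{inside} $E$ is not controlled by $\operatorname{diam}_\varphi(E)$. So you should first thicken: let $U$ be the union of finitely many small balls centred at points of $E$ and covering $E$, with $\overline{U}\subset\hat{\C}\sm P(R)$. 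Such a $U$ is open, connected, of finite intrinsic $\varphi$-diameter, and if $U$ lies in a simply connected subset of $\hat{\C}\sm P(R)$ then every component of $R^{-n}(U)$ maps homeomorphically onto $U$, lifted arcs end at the prescribed endpoints, and one gets $\operatorname{diam}(F)\le C(E)\,\La^{-n}$ exactly as you describe.

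The real problem lies with the two issues you flag at the end and leave open (uniformity of $C$ in $E$, and coverings of degree $>1$). These are not closable technicalities: the lemma as stated is false on both counts, so the ``technical heart'' you defer does not exist. (i) Uniform constants are impossible: given $C,\la$, pick $n$ with $C\la^n$ less than half the diameter of the sphere, and let $E$ be the complement of the union of $\epsilon$-balls about the finitely many points of $P(R)$; this is compact and connected, and for $\epsilon$ small (depending on $n$) the set $R^{-n}(E)$ contains the complement of tiny neighbourhoods of the finite set $R^{-n}(P(R))$, hence has a component of nearly full diameter. (ii) Even allowing $C=C(E)$, the conclusion fails when $E$ separates points of $P(R)$, for exactly the degree reason you identified: the expansion inequality only bounds the $\varphi$-length of a component $F$ of $R^{-n}(E)$ by $\La^{-n}d_n\,\ell_\varphi(E)$, where $d_n$ is the degree of $R^n\colon F\raw E$, and $d_n$ can grow exponentially. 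Concretely for $R=R_a$: let $U_\infty$ be a small B\"ottcher disk about $\infty$, so that $R^2(U_\infty)\subset U_\infty$, and let $E$ be a Jordan curve separating $\overline{A_0}$ from $U_\infty$ and avoiding $P(R)$ (one exists, since every postcritical point sufficiently close to $\overline{A_0}$ lies on $\overline{A_0}$ itself). Since $R^{2k}(\overline{A_0})=\overline{A_0}$ and $R^{2k}(U_\infty)\subset U_\infty$, the compact set $R^{-2k}(E)$ must still separate $\overline{A_0}$ from $U_\infty$: any connected set joining them and avoiding $R^{-2k}(E)$ would be mapped by $R^{2k}$ to a connected set joining them and avoiding $E$. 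But a compact set all of whose components have small diameter cannot separate two continua of definite diameter and definite mutual distance (a routine Janiszewski argument), so for every $k$ some component of $R^{-2k}(E)$ has diameter bounded below independently of $k$. No constants can accommodate this.

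What is true, what your repaired argument proves, and what the paper actually needs (the lemma is only ever applied to bubbles, bubble rays and arcs, which are non-separating) is the restricted statement: for every compact connected $E$ contained in some simply connected open subset of $\hat{\C}\sm P(R)$ --- equivalently, $E$ separating no two points of $P(R)$ --- there exist $C(E)>0$ and $\la=\La^{-1}$ such that every component $F$ of $R^{-n}(E)$ satisfies $\operatorname{diam}(F)\le C(E)\la^n$. I would state and prove exactly that, and not attempt the general case. A final caveat worth recording: the displayed hypothesis itself, a uniform $\La>1$ on all of $\hat{\C}\sm P(R)$, also fails near the superattracting cycle, where the orbifold metric has cusps and the expansion factor of $R$ tends to $1$; it does hold outside forward-invariant B\"ottcher disks around $0$ and $\infty$, and since preimages of sets avoiding such disks again avoid them, this weaker (correct) hypothesis is all your argument requires.
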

This means that the diameter of iterated pre-images of bubbles must tend to zero. 
\begin{Def}
The {\em generation} of a bubble $B$ (for the basilica) is the smallest integer $m \geq 0$ such that $f_{-1}(B)  = B_0$. For the rational map $R_a$ it is the smallest integer $m \geq 0$ such that $R_a^m(B) = A_0$. The generation of a union of bubbles is the smallest generation of a bubble in this union.   
\end{Def} 
We now have a direct consequence of Lemma \ref{hypmetric} we have:
\begin{Cor} \label{landing}
There exist constants $C > 0$ and $0 < \la < 1$ such that if $B$ is any bubble for $R_a$, then
\[
diam(B) \leq c\la^{gen(B)},
\]
where $gen(B)$ is the generation of the bubble. Moreover, every bubble-ray must land at a single point.
\end{Cor}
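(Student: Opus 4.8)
The plan is to derive both assertions from Lemma~\ref{hypmetric}. The obstacle to a direct application is that one cannot feed $\oli{A_0}$ into the Lemma: a bubble contains the post-critical point at its centre, and $\partial A_0$ may meet $P(R)$ (for instance at $p_\al$ and at iterated preimages of it), so $\oli{A_0}\not\sbs\hat\C\sm P(R)$. I would get around this by slicing each bubble into \emph{shells} cut out by B\"ottcher equipotentials, each shell being a compact connected set disjoint from $P(R)$ and an iterated preimage of one fixed shell, to which the Lemma does apply.

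Concretely, recall that $\chi_0$ conjugates $R^2|_{A_0}$ to $w\mapsto w^2$. Fix $\rho\in(0,1)$ and set $s_k=\rho^{1/2^k}$, so $s_k\uparrow1$; define the outer shells $\Sigma_k=\chi_0^{-1}(\{s_k\le|w|\le s_{k+1}\})$ and the inner shells $\tau_j=\chi_0^{-1}(\{\rho^{2^{j+1}}\le|w|\le\rho^{2^j}\})$. Each is a closed annulus contained in $A_0\sm\{0\}\sbs\hat\C\sm P(R)$, and $R^2$ shifts the index, so $R^{2k}(\Sigma_k)=\Sigma_0$ and $R^{2j}(\tau_j)=\tau_0$; together with the single point $\{0\}$ these shells exhaust $\oli{A_0}$.

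For a bubble $B$ with $n=\operatorname{gen}(B)$ I would note that $R^n\colon B\raw A_0$ is proper and branched only over the centre $0$: its branch points are the points of $B$ whose orbit meets a critical point of $R$ before time $n$, but $-1\in J$ never lies in a bubble, and the only other critical point $\infty\in A_\infty$ maps onto the centre $0$. Hence over $A_0\sm\{0\}$ the map is an unbranched cyclic cover, the preimage inside $B$ of each annulus $\Sigma_k$ (resp. $\tau_j$) is a single connected shell $\Sigma_k^B$ (resp. $\tau_j^B$), and since $P(R)$ is forward invariant, $R^{-1}(\hat\C\sm P(R))\sbs\hat\C\sm P(R)$, so these shells again avoid $P(R)$. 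Then $R^{n+2k}(\Sigma_k^B)=\Sigma_0$ places the connected set $\Sigma_k^B$ inside one component of $R^{-(n+2k)}(\Sigma_0)$, whence Lemma~\ref{hypmetric} gives $\operatorname{diam}(\Sigma_k^B)\le C\la^{n+2k}$, and likewise $\operatorname{diam}(\tau_j^B)\le C\la^{n+2j}$. As the shells $\{\Sigma_k^B\}\cup\{\tau_j^B\}$ form a nested chain from $\partial B$ to the centre of $B$, any two points of $\oli B$ are joined through them, so $\operatorname{diam}(\oli B)\le 2\sum_{k\ge0}\operatorname{diam}(\Sigma_k^B)+2\sum_{j\ge0}\operatorname{diam}(\tau_j^B)\le 4C\la^n/(1-\la^2)=:c\la^{\operatorname{gen}(B)}$. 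The main difficulty here is precisely the branching bookkeeping that guarantees a single clean shell foliation of $B$.

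For the second assertion, let $\BB=\{F_k\}$ be a bubble ray with $\oli{F_k}\cap\oli{F_{k+1}}=\{x_k\}$. Since the $F_k$ are distinct and there are only finitely many bubbles of each generation, $\operatorname{gen}(F_k)\raw\infty$; as a bubble ray does not backtrack, its generations are increasing, so the first part gives the convergent bound $\sum_k\operatorname{diam}(\oli{F_k})\le c\sum_k\la^{\operatorname{gen}(F_k)}<\infty$. Then $|x_k-x_{k-1}|\le\operatorname{diam}(\oli{F_k})$ shows that $(x_k)$ is Cauchy with some limit $p$, and $\operatorname{diam}(\oli{F_k})\raw0$ forces every point of $\oli{F_k}$ to converge to $p$; hence the bubble ray lands at the single point $p$. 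The point to be careful about here is the summability of the diameters, which is what the monotonicity of generation along the ray supplies.
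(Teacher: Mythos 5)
Your plan --- slicing each bubble into equipotential shells so that Lemma~\ref{hypmetric} can be applied piece by piece --- is a reasonable way to fill in what the paper leaves implicit (the paper states this corollary with no proof at all, calling it a direct consequence of Lemma~\ref{hypmetric}), and your outer shells and your analysis of the branching of $R^n|_B$ are correct. But the written argument has a genuine gap in each half. For the diameter bound: the inner shells run the \emph{wrong way} under the dynamics. In B\"ottcher coordinates $R^2$ acts as $w \mapsto w^2$, which sends $\{\rho^{2^{j+1}} \le |w| \le \rho^{2^j}\}$ onto $\{\rho^{2^{j+2}} \le |w| \le \rho^{2^{j+1}}\}$; that is, $R^2(\tau_j) = \tau_{j+1}$, so $\tau_j$ is a \emph{forward image} of $\tau_0$, not an iterated preimage, and the identity $R^{2j}(\tau_j)=\tau_0$ is false. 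Hence $\tau_j^B$ is only a component of $R^{-n}(\tau_j)$, where the reference set $E=\tau_j$ varies with $j$ and accumulates on the post-critical point $0$; the claimed bound $\operatorname{diam}(\tau_j^B) \le C\la^{n+2j}$ has no justification, and with only $\operatorname{diam}(\tau_j^B)\le C\la^{n}$ per shell your chaining sum over the inner shells diverges, so $\operatorname{diam}(B)\le c\la^{\operatorname{gen}(B)}$ is not proved. The cheapest repair is to drop the inner shells entirely: $B$ is a bounded Jordan domain, the diameter of the closure of a planar domain equals the diameter of its boundary, and $\partial B$ lies in the closure of $\bigcup_k \Sigma_k^B$, so your geometric series over the outer shells alone bounds $\operatorname{diam}(\oli{B})$. (Even then, a caution: $\Sigma_0$ is an annulus winding around $0 \in P(R)$, which is exactly the situation where the uniformity claimed in Lemma~\ref{hypmetric} is delicate --- your outer-shell estimate applied to $B=A_0$, $n=0$, would give $\operatorname{diam}(\Sigma_k)\raw 0$, which is false since $\Sigma_k \raw \partial A_0$; the orbifold expansion degenerates at the cusps $0,\infty$, so a safe argument should use sets that do not encircle points of $P(R)$.)

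For the landing statement, the argument hinges on ``a bubble ray does not backtrack, so its generations are increasing''. That is true for the basilica, whose bubbles form a tree, but it is false for $R_a$: at $-1$, and at every iterated preimage of $-1$, four bubbles $F_1,F_2,F_1',F_2'$ meet pairwise at a single point, with $\operatorname{gen}(F_1)=\operatorname{gen}(F_1')$ and $\operatorname{gen}(F_2)=\operatorname{gen}(F_2')$ different from them; thus $F_1, F_2, F_1'$ is a legitimate bubble-ray segment along which the generation goes up and then back down, after which the ray may continue descending through the parent of $F_1'$. What you do establish --- $\operatorname{gen}(F_k)\raw\infty$, hence $\operatorname{diam}(\oli{F_k})\raw 0$ --- is not enough: vanishing diameters make consecutive touching points close but do not make $(x_k)$ Cauchy. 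For that you need $\sum_k \la^{\operatorname{gen}(F_k)}<\infty$, and the crude count (at most $2^{g+1}$ bubbles of generation $\le g$, each visited at most once) only gives $\sum_g 2^{g}\la^{g}$, which diverges unless $\la<1/2$. So the landing of bubble rays for $R_a$ requires a genuine structural argument (this is essentially what \cite{AY} provides by other means); as written, the second assertion of the corollary is not established.
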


\begin{Lem} \label{diamep}
Let $B$ be a bubble of the basilica. Then for any $\vep > 0$ there exist only finitely many components $U$ of $K_{-1} \sm B$
for which $\psi_+(U)$ has diameter $diam (U) \geq \vep$.
\end{Lem}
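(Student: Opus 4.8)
The plan is to combine the geometric decay of bubble diameters from Corollary~\ref{landing} with the fact that $\psi_+$ maps bubbles of the basilica to bubbles of $R_a$ of comparable generation. I read the components of $K_{-1}\sm B$ as the connected pieces into which $\overline{B}$ cuts $K_{-1}$: apart from at most one exceptional piece (the one containing $B_0$), each component $U$ is a subtree of the bubble tree of the basilica that is attached to $\overline{B}$ at a single bi-accessible cut point $x=x(U)\in\partial B$ and does not contain $B_0$. Let $U_0$ be the unique bubble of $U$ adjacent to $x$; since $U$ does not contain $B_0$, the root direction of the bubble tree from every bubble of $U$ points toward $x$, so $U_0$ realizes the minimal generation $g:=\operatorname{gen}(U)$ and the generation increases strictly as one moves into $U$ away from $U_0$.

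The key step is the estimate $\operatorname{diam}(\psi_+(U))\le C'\la^{\operatorname{gen}(U)}$. Fix any point $y\in\psi_+(U)$; it lies in $\psi_+(B')$ for a basilica bubble $B'\subset U$ that is joined to $U_0$ by a chain of successively touching bubbles whose generations are $g,g+1,g+2,\dots$. Because $\psi_+$ sends touching bubbles to touching bubbles, and because the $R_a$-generation of the image of a basilica bubble differs from its basilica generation by at most a fixed additive constant (the two generations are measured against $A_0$ and $B_0$, which sit at opposite ends of the super-attracting $2$-cycle), the images of this chain form a connected chain of $R_a$-bubbles whose diameters are bounded, by Corollary~\ref{landing}, by $c\la^{g},c\la^{g+1},\dots$ up to the fixed shift. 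Summing the geometric series controls the distance from $y$ to $\psi_+(U_0)$, whence
\[
\operatorname{diam}(\psi_+(U))\le \operatorname{diam}(\psi_+(U_0))+\frac{2c\la^{g}}{1-\la}\le C'\la^{\operatorname{gen}(U)},
\]
with $C'$ independent of $U$. Any identifications $\psi_+$ performs inside $U$ only shrink the image, so this upper bound is unaffected, and the same estimate bounds the diameter of the image of the (dense) set of bubbles and cut points in $U$, hence of its closure.

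Finally I turn the decay estimate into the finiteness statement. Fix $\vep>0$ and set aside the single exceptional component containing $B_0$. For every remaining component the bound above shows that $\operatorname{diam}(\psi_+(U))\ge\vep$ can hold only when $\operatorname{gen}(U)\le N(\vep):=\log(C'/\vep)/\log(1/\la)$. The map $U\mapsto U_0$ sending a component to its lowest-generation bubble is injective, because distinct components are disjoint, so the number of components with $\operatorname{gen}(U)\le N$ is at most the (finite) number of basilica bubbles of generation at most $N$. Therefore only finitely many components $U$ satisfy $\operatorname{diam}(\psi_+(U))\ge\vep$.

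The main obstacle is the middle step: $\psi_+(U)$ is an \emph{infinite} union of bubbles accumulating on the landing point of a bubble ray, so its diameter is not controlled by the leading bubble alone. This is precisely where both halves of Corollary~\ref{landing}---the exponential diameter bound and the landing of every bubble ray---are needed, and where one must verify that the combinatorial structure of the branch (touching preserved under $\psi_+$ and generations increasing monotonically into $U$) makes the telescoping geometric series converge.
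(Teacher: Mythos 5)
Your proof is correct and follows essentially the same route as the paper's: both arguments rest on Corollary~\ref{landing} (exponential decay of bubble diameters with generation) together with the observation that only finitely many components can have generation below any fixed bound. Your write-up is in fact more careful than the paper's terse version --- the paper calls each component $\psi_+(U)$ a ``bubble-ray'' and appeals to ``the sum of diameters of the bubbles'' in it, whereas you correctly treat $U$ as a subtree attached at a cut point and control $\operatorname{diam}(\psi_+(U))$ by a telescoping geometric series along chains of strictly increasing generation, which is exactly what is needed to make the paper's sketch rigorous.
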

\begin{proof}
Every such component $\psi_+(U)$ is a bubble-ray. Let $gen(U)$ be the lowest generation among the bubbles in this bubble-ray. Then it is clear that for any $K < \infty$ there are only finitely many components $U$ such that $\psi_+(U)$ has $gen(U) \leq K$. But since the diameter of bubbles tend to zero exponentially with the generation, according to the above Corollary, we get that also the sum of diameters of the bubbles in $\psi_+(U)$ tend to zero with the generation. This proves the lemma.
\end{proof}

We also need the following.
\begin{Lem} \label{tips}
For the rational map $R_a$, we cannot have an infinite bubble ray landing at $p_{\al}$. 
\end{Lem}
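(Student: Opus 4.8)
The plan is to argue by contradiction, using the tree structure of the bubbles together with the fact that $p_{\al}$ is the unique common boundary point of $A_0$ and $A_{\infty}$, and that these are the \emph{only} two bubbles whose closure contains $p_{\al}$. First I would record the structural input. Since $R_a$ is sub-hyperbolic and post-critically finite, every bubble is a Jordan domain and the bubbles form a tree, two bubbles being adjacent exactly when their closures meet (necessarily in a single point); Corollary \ref{landing} guarantees that this tree is ``locally finite at every scale'' in the sense that only finitely many bubbles have generation below any fixed bound. Moreover, transporting the combinatorics of the basilica's $\al$-fixed point (at which only the two rays of angle $1/3$ and $2/3$ land, so that exactly $B_0$ and $B_{-1}$ touch $\al$) through the correspondence of \cite{AY}, the point $p_{\al}=\overline{A_0}\cap\overline{A_{\infty}}$ is touched by exactly the two bubbles $A_0$ and $A_{\infty}$, and $R_a$ swaps them (because $0\mapsto\infty\mapsto 0$).

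The one elementary fact I would isolate from the tree structure is this: if $B$ is a bubble and $x\in\partial B$ is an attaching point, then the subtree $T_x$ hanging off $B$ at $x$ satisfies $\overline{T_x}\cap\overline{B}=\{x\}$. Consequently any connected union of bubbles that avoids $B$ lies inside a single subtree $T_x$, and if such a union accumulates at a point of $\partial B$, that point must be $x$.

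Now suppose, for contradiction, that $\BB=\{F_j\}$ is an infinite bubble ray landing at $p_{\al}$. By Corollary \ref{landing} the generations of the $F_j$ tend to infinity, so after discarding finitely many bubbles the tail $\TT=\{F_j:j\ge N\}$ is a connected chain of bubbles of generation at least $2$ accumulating at $p_{\al}$; in particular $\TT$ contains neither $A_0$ (generation $0$) nor $A_{\infty}$ (generation $1$). Since $\TT$ avoids $A_0$ and is connected, it lies in a single subtree off $A_0$; as it accumulates at $p_{\al}\in\partial A_0$, the fact above forces this to be the subtree attached to $A_0$ at $p_{\al}$, which (because only $A_0,A_{\infty}$ touch $p_{\al}$) is precisely the one rooted at $A_{\infty}$. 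Repeating the argument inside this subtree with $A_{\infty}$ in place of $A_0$: the tail avoids $A_{\infty}$, so it lies in a single subtree off $A_{\infty}$, and accumulating at $p_{\al}\in\partial A_{\infty}$ forces that subtree to be attached to $A_{\infty}$ at $p_{\al}$. But the only bubble touching $A_{\infty}$ at $p_{\al}$ is $A_0$, so this subtree is the one containing $A_0$, which lies on the root side of $A_{\infty}$ and hence \emph{outside} the subtree we are working in. This contradiction shows that no such ray exists.

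The routine verifications (that bubbles are Jordan domains, that they form a tree, and that $\overline{T_x}\cap\overline{B}=\{x\}$) follow from sub-hyperbolicity and Corollary \ref{landing}. The one genuinely load-bearing ingredient, and the step I expect to require the most care, is the structural fact that \emph{exactly} $A_0$ and $A_{\infty}$ meet $p_{\al}$: the whole recursion terminates only because the unique bubble reachable from $A_0$ across $p_{\al}$ is $A_{\infty}$, and vice versa. If a third bubble also touched $p_{\al}$, the tail could descend into its subtree and the recursion would stall. Establishing this finiteness of accesses — inherited from the two rays landing at $\al$ in the basilica, via \cite{AY} — is therefore where I would concentrate the effort.
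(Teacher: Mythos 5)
Your argument rests on a structural premise that is false for $R_a$: the bubbles of $R_a$ do \emph{not} form a tree, and the property $\overline{T_x}\cap\overline{B}=\{x\}$ fails. This is not a routine consequence of sub-hyperbolicity and Corollary \ref{landing}; it is destroyed precisely by the mating identifications that distinguish $R_a$ from the basilica. The map $\psi_+$ glues the two \emph{distinct} basilica points $w_1\neq w_2$ (and all their iterated pre-images) to single points, and each such gluing closes an arc of bubbles into a loop. Concretely, four distinct bubbles $F_1,F_2,F_1',F_2'$ meet at the critical point $-1$, so the adjacency graph already contains cycles; worse, there are loops through distinct touching points. In Figure \ref{R3} one has $A_0=F_1$, and the three bubbles $A_\infty$, $A_0$, $F_1'$ form a genuine cycle: $\overline{A_0}\cap\overline{A_\infty}\ni p_{\al}=1$, $\overline{A_0}\cap\overline{F_1'}\ni -1$, and $\overline{F_1'}\cap\overline{A_\infty}\ni -3=\psi_+(-\al)$, where $-\al$ is the second pre-image of the basilica fixed point $\al$. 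Consequently the connected union of bubbles avoiding $A_0$ that contains $A_\infty$ also contains $F_1'$, and its closure meets $\overline{A_0}$ in at least the two points $p_{\al}$ and $-1$; symmetrically, the chain off $A_\infty$ containing $A_0$ meets $\overline{A_\infty}$ at both $p_{\al}$ and $-3$. So your very first deduction (``the tail avoids $A_0$ and accumulates at $p_{\al}$, hence lies in the subtree attached at $p_{\al}$, rooted at $A_\infty$'') is unfounded, and the concluding contradiction, which uses the same subtree property at $A_\infty$, evaporates. This is not special to $R_3$: the arc of basilica bubbles joining $w_1$ to $w_2=-w_1$ is symmetric under $z\mapsto -z$, hence passes through $B_0$, so its $\psi_+$-image is always a loop through $A_\infty$ and the critical point, and every pullback yields further loops.

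You also located the load-bearing step in the wrong place. The fact that exactly $A_0$ and $A_\infty$ touch $p_{\al}$ is true and is the easy part: a bubble $B$ of generation $g\geq 2$ with $p_{\al}\in\overline{B}$ would push forward to the unique generation-two bubble with $p_{\al}$ on its boundary, and this is excluded because $R$ restricted to $\overline{A_0}$ is injective, so $\partial A_0$ (and, by the symmetry $z\mapsto -2-z$, the boundary of the other pre-image of $A_\infty$) cannot contain both pre-images of $p_{\al}$. What is genuinely hard is exactly the statement you dismiss as routine: that a chain of bubbles avoiding a bubble cannot accumulate on its boundary except at its attachment point. For $A_0$, $A_\infty$ and the point $p_{\al}$ this \emph{is}, in essence, Lemma \ref{tips}, so assuming it is circular. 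This is why the paper's proof is dynamical rather than combinatorial: if a non-periodic infinite bubble ray landed at the fixed point $p_{\al}$, all its forward images would too; pulling back a finite bubble ray by the inverse branch fixing the repelling point $p_{\al}$ and applying the pigeonhole principle to the finitely many bubbles of each generation, the paper extracts a \emph{periodic} infinite bubble ray landing at $p_{\al}$, which Lemma 6.2 of \cite{AY} forbids. Some input of this kind (expansion at $p_{\al}$ plus the result on periodic bubble rays) appears indispensable, and your proposal contains no substitute for it.
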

\begin{proof}
Suppose that there is a infinite bubble ray $\BB$ landing at $p_{\al}$ (the fixed point between $A_{\infty}$  and $A_0$). If $\BB$ is periodic the result follows from Lemma 6.2 in \cite{AY}. If $\BB$ is not periodic it means that all its forward images also have to land at $p_{\al}$. Hence there are infinitely many bubble rays $\BB_k$ landing at $p_{\al}$. This we claim is impossible.

Take a finite bubble ray $\BB_0$ such that its last bubble $B$ of highest generation belongs to  a ball $B(p_{\al},r)$ of radius $r > 0$ around $p_{\al}$. Suppose moreover that $r > 0$ is chosen sufficiently small so that inside this ball there is an (conformal) inverse branch of $R$. Let $\BB_k = R^{-k}(\BB_0)$, where we use the inverse branch of $R$ in the neighbourhood $B(p_{\al},r)$. 
Since there are only finitely many bubbles of any fixed generation there is at least one finite bubble ray $\BB_k'$ that is contained in infinitely many bubble rays $\BB_k$, $k > k'$. Take such $\BB_k'$. Then there is some $m > 0$ such that $R^{-m}(\BB_k')$ contains $\BB_k'$. In other words, $\BB_{k'+m}$ is an extension of $\BB_k'$. Repeating this we see that the bubble rays $\BB_{ml+k'}$ is an increasing sequence of finite bubble rays. The union of all these bubble rays is an infinite bubble ray $\BB_{\infty}$ that has to land at $p_{\al}$, since this point is repelling. Moreover,  $R^m(\BB_{\infty}) = \BB_{\infty}$, so it is periodic. Now apply Lemma 6.2 in \cite{AY} again to reach a contradiction.

\end{proof}

As a corollary, the lemma shows that an infinite bubble ray $\BB$ cannot land on a touching point between two bubbles for $R_a$. These points are precisely the images of the bi-accessible points under $\psi_+$.

These lemmas enables us to extend $\psi_+$ and $\psi_i$ to the filled in Julia sets.

Firstly, since the map $R_a$ is sub hyperbolic, its Julia set is locally connected and hence we may extend $\psi_+$ continuously to the closure of each bubble in $K_{-1}$.

\begin{Def}
A {\em tip} of the Julia set for the basilica is a point $w \in J(f_{-1})$ which does not belong to any bubble. 
\end{Def}

If $z \in K_{-1}$ is a tip of a bubble-ray $\BB$, i.e. $\BB$ consists of bubbles $B_n$ where $dist(z,B_n) \raw 0$ as $n \raw \infty$, then put
\[
\psi_+(z) = \lim\limits_{n \raw \infty} \psi_+(B_n).
 \]
By Corollary \ref{landing} $\psi_+$ is well defined.

We now prove that $\psi_+$ is continuous.
\begin{Lem}
The function $\psi_+$ is continuous.
\end{Lem}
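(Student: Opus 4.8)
The plan is to verify continuity at an arbitrary point $z_0 \in K_{-1}$ by producing, for every $\vep > 0$, a neighbourhood $V$ of $z_0$ with $diam\, \psi_+(V \cap K_{-1}) < \vep$. I would split the argument according to whether $z_0$ lies in the closure of a bubble or is a tip.

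First suppose $z_0 \in \oli{B}$ for some bubble $B$. Since $R_a$ is subhyperbolic its Julia set is locally connected, so the restriction of $\psi_+$ to $\oli{B}$ (the continuous extension already constructed) is continuous at $z_0$; thus it suffices to control points approaching $z_0$ from outside $B$. Any such point lies in a component $U$ of $K_{-1} \sm B$, and each such $U$ is attached to $\oli B$ at a single touching point $x_U \in \partial B$ (an iterated pre-image of $\al$). By Lemma \ref{diamep} only finitely many components $U$ satisfy $diam\, \psi_+(U) \geq \vep/3$; along each of these finitely many $\oli U$ the map $\psi_+$ is itself continuous at its attaching point, and at a touching point only finitely many bubbles meet, so these contribute finitely many continuous pieces that I glue together. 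For every remaining component $diam\, \psi_+(U) < \vep/3$, and $\psi_+(U)$ contains the value $\psi_+(x_U)$; since $x_U \raw z_0$ forces $\psi_+(x_U) \raw \psi_+(z_0)$ by continuity on $\oli B$, every sufficiently close point has image within $\vep$ of $\psi_+(z_0)$.

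Next suppose $z_0$ is a tip, so $z_0$ is the landing point of a bubble ray $\BB = \{B_n\}$ with $gen(B_n) \raw \infty$ and $dist(z_0, B_n) \raw 0$, and $\psi_+(z_0) = \lim_n \psi_+(B_n)$ by Corollary \ref{landing}. Given $\vep$, I choose $N$ so large that $\psi_+(B_n)$ lies within $\vep/3$ of $\psi_+(z_0)$ for $n \geq N$. Let $U_0$ be the component of $K_{-1} \sm B_N$ containing the tail of $\BB$ (hence containing $z_0$ in its closure). Its image $\psi_+(U_0)$ is a union of bubbles of $R_a$ all of generation at least $gen(\psi_+(B_{N+1}))$, so by Corollary \ref{landing} its diameter tends to $0$ as $N \raw \infty$; enlarging $N$ I may assume $diam\, \psi_+(U_0) < \vep/3$. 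Because the finitely many bubbles $B_1, \dots, B_N$ and the finitely many other large-image components of $K_{-1}\sm B_N$ all lie at positive distance from the tip $z_0$, a sufficiently small ball about $z_0$ meets $K_{-1}$ only inside $\bigcup_{n>N}\oli{B_n} \cup U_0$, on which $\psi_+$ stays within $\vep$ of $\psi_+(z_0)$.

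I expect the main obstacle to be precisely the last claim in each case: that a genuine metric ball about $z_0$ meets only the ``tail'' of the bubble structure, so that the finitely many low-generation bubbles and the finitely many large-image components supplied by Lemma \ref{diamep} stay bounded away from $z_0$. This is where the geometric decay of bubble diameters in Corollary \ref{landing} must be combined with the finiteness in Lemma \ref{diamep} and the landing of bubble rays; one must also rule out that the image of the tail accumulates at a touching point or at $p_{\al}$ of $R_a$, which is exactly what Lemma \ref{tips} supplies, guaranteeing that tips are carried to tips and that the limiting value $\psi_+(z_0)$ is consistent with approach from every direction.
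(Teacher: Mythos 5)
Your proposal is correct in outline and takes a genuinely different route from the paper. The paper argues sequentially: given $w_k \raw w$, it takes the minimal bubble rays $\BB$ (ending at $w$) and $\UU_k$ (ending at $w_k$), forms the set $\GG_k$ consisting of their symmetric difference together with the common bubble of maximal generation, notes that both $w$ and $w_k$ lie in $\oli{\GG_k}$, and shows $diam(\psi_+(\GG_k)) \raw 0$ -- immediately from generations tending to infinity when $w$ is a tip, and with an extra device (small boundary arcs $\ga_k$ of the bubble containing $w$, adjoined to the shrinking sets $\CC_k$) when $w$ lies on a bubble. You instead build, for each $\vep>0$, a neighbourhood of $z_0$ with small image oscillation, using the limb decomposition of $K_{-1}\sm \oli{B}$ and the finiteness statement of Lemma \ref{diamep}. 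It is worth noting that the paper states Lemma \ref{diamep} but never actually invokes it in its continuity proof (it works directly from Corollary \ref{landing}); your argument is the one that puts that lemma to work, anchoring everything at attaching points via the already-established continuity on closures of bubbles rather than via connecting chains. Both arguments rest on the same two facts: geometric decay of image diameters with generation, and continuity of $\psi_+$ on each $\oli{B}$ coming from local connectivity of $J(R_a)$.

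Two steps need tightening. First, in your bubble case, the clause ``along each of these finitely many $\oli{U}$ the map $\psi_+$ is itself continuous at its attaching point'' is circular as stated: continuity of $\psi_+$ on $\oli{U}$ at $x_U$ is part of what is being proved. It is fixable: if $x_U = z_0$, exactly one other bubble $B'$ touches $B$ at $z_0$; rerun your limb argument relative to $B'$, observing that all limbs of $K_{-1}\sm\oli{B'}$ contained in $U$ attach at points different from $z_0$, so the recursion terminates after one step. Second, in the tip case, the claim that a small ball meets $K_{-1}$ only inside the tail does not follow from the finitely many bad sets being at positive distance, since there are infinitely many small-image limbs attached to $\partial B_N$; either invoke local connectivity of $K_{-1}$ (components of open subsets are relatively open, so $U_0$ is a relative neighbourhood of $z_0$), or -- simpler -- drop the claim: a point in a small-image limb $V$ attached at $x_V\in\partial B_N$ has image within $\vep/3$ of $\psi_+(x_V)\in\oli{\psi_+(B_N)}$, hence within $2\vep/3$ of $\psi_+(z_0)$ by your choice of $N$. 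Two minor remarks: Lemma \ref{tips} plays no role here (it is needed later for the ray-equivalence statements, not for continuity; well-definedness at tips is Corollary \ref{landing}); and, like the paper, you pass from Corollary \ref{landing} (single bubbles) to diameters of unions of infinitely many bubbles, which strictly requires summing the geometric series of diameters along chains of increasing generation, as in the proof of Lemma \ref{diamep}.
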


\begin{proof}
We have to prove that if $w_k \raw w$, $w_k \in K_{-1}$, then $\psi_+(w_k) \raw \psi_+(w)$. Suppose that $w$ belongs to the closure of a minimal bubble-ray $\BB = \{B_k\}_{k=0}^N$, starting at $B_0$. Hence $N$ is finite if $w$ belongs to a boundary of a bubble and infinite if $w$ belongs to a tip.  Let $\UU_k$ be the minimal bubble-ray, whose closure contains $w_k$. Let $G_k$ be the (unique) bubble of maximal generation such that $G_k \subset \BB \cap \UU_k$. Set 
\[
\GG_k = ((\UU_k \cup \BB) \sm (\UU_k \cap \BB)) \cup G_k.
\] 
Note that $w, w_k \in \oli{\GG_k}$ for all $k$. We divide the proof in two cases.

{\bf Case 1: $gen(\BB)=\infty$}. By the hyperbolicity of $f_{-1}$ this means that $\GG_k$ has to decrease to a point, as $k \raw \infty$, since $gen(\GG_k) \raw \infty$. Obviously, also $gen(\psi(\GG_k)) \raw \infty$ as $k \raw \infty$. But this means that, by Corollary \ref{landing}, also $diam(\psi_+(\GG_k)) \raw 0$. Since both $w_k$ and $w$ belong to the closure of $\GG_k$, we have immediately that $\psi_+(w_k) \raw \psi_+(w)$. 

{\bf Case 2: $gen(\BB) < \infty$}. Thus $w$ belongs to the closure of a bubble, say $B \subset \BB$. If $\GG_k$ eventually becomes equal to $B$ then it means that $w_k$ eventually belongs to the closure of $B$ and the lemma follows from the construction of $\psi_+$, which is continuous on the closure of bubbles. 

If $\GG_k$ does not eventually become equal to $B$, let $p_k \in \oli{B}$ be the the touching point between $B$ and another bubble in $\GG_k$ attached to $B$. Clearly $p_k \raw w$ as $k \raw \infty$. 

If $p_k = w$ for all sufficiently large $k$, that means that $w$ is an iterated pre-image of $\al$. Hence $w = \oli{B} \cap \oli{B'}$ where $B' \subset \GG_k$ is another bubble of higher generation than $B$. Write 
$\CC_k = \GG_k \sm (B \cup B')$. If $\CC_k = \emptyset$ for sufficiently large $k$ then it means that $w_k \in \partial B'$ and $\psi_+(w_k) \raw \psi_+(w)$ by the continuity on the boundary of bubbles. 
If $\CC_k \neq \emptyset$ for arbitrarily large $k$ then suppose for simplicity that $\CC_k \neq \emptyset$ for all $k$ by passing to a suitable subsequence. Let $q_k$ be the touching point between $\CC_k$ and $B'$. Clearly, $gen(\CC_k) \raw \infty$ and $q_k \raw w$. 
Hence $diam(\psi_+(\CC_k)) \raw 0$ as $k \raw \infty$ by Corollary \ref{landing}. Let $\ga_k$ be the smallest closed arc along the boundary of $B'$ between $q_k$ and $w$. Since $q_k \raw w$, we have that the set $\CC_k \cup \ga_k$ 
is a connected set whose diameter tends to zero, and hence the set is true for $\psi_+(\CC_k \cup \ga_k)$. Since both $w_k$ and $w$ belong to the closure of this set it follows that $\psi_+(w_k) \raw \psi_+(w)$. 

Finally, we consider the case $p_k \neq w$ for all $k$ (by possibly passing to a suitable subsequence). Similar to the above case, let $\CC_k = \GG_k \sm B$ and let $\ga_k$ be the smallest arc along the boundary of $B$ connecting $w$ with $p_k$. Since the generation of $\CC_k$ must tend to infinity, we have that $diam(\CC_k) \raw 0$ and hence also $diam(\psi_+(\CC_k)) \raw 0$. Again $\CC_k \cup \ga_k$ is a connected set whose diameter tends to zero as $k \raw \infty$ and the same is true for $\psi_+(\CC_k \cup \ga_k)$. Since $w_k$ belongs to the closure of $\CC_k \cup \ga_k$ we must have $\psi_+(w_k) \raw \psi_+(w)$, as $k \raw \infty$. 
\end{proof}

In order to extend $\psi_c$ to the full Julia set we need first a lemma.
Recall that $E = \cup_{n=0}^{\infty} f_c^{-n}(0)$. The points in $E$ are bi-accessible and the set $E$ is dense in the Julia set. Denote the full set of bi-accessible points in the Julia set by $\BB iac$, (so $E \subset \BB iac$).
Let
$$\Th = \{ (\th, \th') : \text{  There exist $z \in E$, such that  $\RR_c(\th)$ and $\RR_c(\th')$ land on $z$} \}$$
be the set of angle pairs landing at points in $E$.  We call the pair $(\th,\th')$ a {\em ray pair} for the point $z$. More generally, we say that the angle $\al$ is a landing angle for
$z$ if the ray $\RR_c(\al)$ lands at $z$. If $z \in J(f_c)$ is a multiply accessible point in $J(f_c)$ with landing angles $(\al_1,\ldots,\al_p)$, then we say that
the angles in $(\al_j,\al_{j+1})$ are {\em adjacent} if there is no angle $\al_k$ between them.


\begin{Lem} \label{biac}
Let $w \in J(f_c)$ be a multiply accessible point with landing angles $(\al_1,\ldots,\al_p)$. Put $\al_{p+1}=\al_1$.
Then for each adjacent angle pair $(\al_j,\al_{j+1})$, $j=1, \ldots , p$  there is a sequence $w_k \raw w$, $w_k \in E$ such that the ray pairs $(\th_k,\th_k')$ converge to
$(\al_j,\al_{j+1})$, i.e. $\th_k \raw \al_j$ and $\th_k' \raw \al_{j+1}$, as $k \raw \infty$.
\end{Lem}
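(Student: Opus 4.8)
The plan is to read the combinatorics of $w$ off the Carathéodory map and then to realize each adjacent pair by approaching $w$ along the corresponding arm of the Julia set. First I would record the global setup. Since $f_c$ is a Misiurewicz polynomial, $J(f_c)$ is locally connected, so the map $\ga:\R/\Z\raw J(f_c)$ sending $\th$ to the landing point of $\RR_c(\th)$ is continuous and surjective, and $\ga(\th)=\ga(\th')$ precisely when the two rays land together. By hypothesis $\ga^{-1}(w)=\{\al_1,\ldots,\al_p\}$, and the $p$ open arcs $I_j=(\al_j,\al_{j+1})$ are the angle-sets of the $p$ components (arms) $T_j$ of $J(f_c)\sm\{w\}$. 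The two rays $\RR_c(\al_j),\RR_c(\al_{j+1})$ together with $w$ bound a closed sector $S_j\subset\hat{\C}$ whose interior meets $J(f_c)$ exactly in $T_j$. The basic geometric fact I would isolate is that every external ray landing at a point $z\in T_j$ has angle in $I_j$: such a ray cannot cross either boundary ray and cannot pass through $w$ (it lands at $z\neq w$), so it stays in $S_j$ and its angle lies in $\oli{I_j}$, with the endpoints $\al_j,\al_{j+1}$ excluded since those rays land at $w$. In particular every bi-accessible $z\in T_j$ has both of its angles strictly inside $I_j$.

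Next I would produce the approximating points. Because $E$ is dense in $J(f_c)$ and, near any non-branch point of $T_j$, the Julia set coincides locally with $T_j$, the set $E\cap T_j$ is dense in $T_j$; hence there are points of $E\cap T_j$ arbitrarily close to $w$. Choosing $w_k\in E\cap T_j$ with $w_k\raw w$ and letting $(\th_k,\th_k')$ be its ray pair ordered so that $\al_j<\th_k<\th_k'<\al_{j+1}$ (legitimate by the previous paragraph), continuity of $\ga$ gives $\ga(\th_k)=w_k\raw w$, so any subsequential limit $\th^{*}$ of $\th_k$ satisfies $\ga(\th^{*})=w$, i.e. $\th^{*}\in\{\al_1,\ldots,\al_p\}$. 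Combined with $\th_k\in I_j$ this forces $\th^{*}\in\{\al_j,\al_{j+1}\}$, and likewise for $\th_k'$. Thus the only possible limits are the two walls of the sector, and it remains only to guarantee that $\th_k$ and $\th_k'$ select \emph{different} walls.

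This straddling step is where I expect the real difficulty to lie. The plan is to choose the $w_k$ not from the tips of the sub-arms of $T_j$ but from its \emph{trunk}, namely as cut points of $T_j$ whose removal separates points of $T_j$ accessible near $\al_j$ from points accessible near $\al_{j+1}$; equivalently, in the lamination the leaf $\{\th_k,\th_k'\}$ should separate the polygon side $\{\al_j,\al_{j+1}\}$ from the remaining leaves that accumulate at $w$. For such a choice the leaves $\{\th_k,\th_k'\}$ form a nested sequence converging to the chord $\{\al_j,\al_{j+1}\}$, and the restriction on limits from the previous paragraph then pins down $\th_k\raw\al_j$ and $\th_k'\raw\al_{j+1}$ by a squeeze argument. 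The point requiring care is that an arbitrary sequence $w_k\raw w$ in $E$ need not straddle (its two angles may bunch near a single wall), so the separation property must be built into the selection.

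To secure a straddling choice cleanly I would fall back on the dynamics: since $f_c$ is Misiurewicz, $w$ is pre-periodic, so after applying a suitable $f_c^{s}$ the sector configuration at $w$ maps to that of one of finitely many model branch points, or of the marked points $\ga(1/3),\ga(2/3)$, where the combinatorics is explicit and a straddling sequence can be written down by hand. The relevant inverse branches of $f_c$ do not cross the critical value, so they preserve both the adjacency of sectors and the ordering $\al_j<\th_k<\th_k'<\al_{j+1}$; transporting the model straddling sequence back through these branches yields one for $w$. Combining this with the limit restriction of the second paragraph then gives $\th_k\raw\al_j$ and $\th_k'\raw\al_{j+1}$, as required.
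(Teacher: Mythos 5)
Your first two paragraphs are sound: the arm/sector decomposition, the confinement of landing angles of points of $T_j$ to $I_j$, and the observation that every subsequential limit of $\th_k$ (resp.\ $\th_k'$) must be $\al_j$ or $\al_{j+1}$ are all correct, and you rightly identify the straddling step as the entire difficulty. The gap is in your resolution of that step: you claim that ``since $f_c$ is Misiurewicz, $w$ is pre-periodic.'' That is false for the points the lemma must cover. The lemma is stated, and is used later in the paper (in the extension of $\psi_c$ and in Case 1 of the ray-equivalence argument), for \emph{all} multiply accessible points, including bi-accessible ones ($p=2$). The Julia set of a Misiurewicz--Thurston polynomial is a dendrite, whose bi-accessible points are its cut points; these are uncountable (every interior point of an arc in the dendrite is one), whereas pre-periodic points form a countable set. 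Pre-periodicity is guaranteed only for branch points ($p\geq 3$), by Thurston's no-wandering-triangles theorem, so your reduction to ``finitely many model branch points'' collapses exactly in the generic case, and with it the construction of the straddling sequence. (Even at genuine branch points, ``the combinatorics is explicit and a straddling sequence can be written down by hand'' is an assertion rather than an argument.)

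The paper gets the straddling without invoking any dynamics at $w$ itself. It argues by contradiction: delete from $S^1$ every open arc $(\th,\th')$ subtended by a ray pair of a point of $E$, keeping the arcs $(\al_j,\al_{j+1})$; the residual set $A$ is closed and nonempty, and since ray-pair arcs are pairwise unlinked and partially ordered by inclusion with the $(\al_j,\al_{j+1})$ as upper bounds, Zorn's lemma yields maximal ray pairs in $A$. If some maximal pair is not one of the $(\al_j,\al_{j+1})$, then the set $B$ of landing points of rays with angles in $A$ is a closed connected subset of $J(f_c)$ with more than one point; taking a path $\ga\subset B$ between two bi-accessible points of $B$, using non-normality (some iterate $f_c^N$ maps a puzzle piece over the whole Julia set) and a univalence-versus-loop argument, the paper forces a point of $E$ into the interior of $\ga$, contradicting maximality. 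Once the maximal pairs are exactly the chords $(\al_j,\al_{j+1})$, the straddling is automatic: the deleted arcs below such a chord are nested, each has as endpoints the two landing angles of a single point of $E$, and maximality means they increase up to the chord, i.e.\ $\th_k\raw\al_j$ and $\th_k'\raw\al_{j+1}$ simultaneously. To repair your proof you would need a substitute for pre-periodicity that works at arbitrary cut points; the paper's maximal-arc argument is precisely such a substitute.
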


\begin{proof}
Suppose the statement of the lemma is not true. That the leaves
$(\al_j,\al_{j+1})$ form a geodesic polygon in the unit disk. First we construct a set of angles $A$ by deleting all arcs $(\th,\th') \in \Th$ apart from the arcs $(\al_j,\al_{j+1})$ (of course) from the unit circle $S^1$. Since each arc is open and its complement non-empty the
intersection of the complement all such (countable) arcs is non empty closed set in $S^1$. We will study the resulting set $A$ of angles and their corresponding external rays which land on $J(f_c)$.

Since the ray-pairs $(\th,\th')$ are partially ordered by inclusion, and since there is an upper bound for each partial ordering (one of the arcs $(\al_j,\al_{j+1})$) there has to be a maximal element in each ordering (by Zorn's Lemma); i.e. there has to be a {\em maximal} ray pair $(\th,\th') \in A$. Note however, that the ray pair $(\th,\th')$ may not belong to $\Th$ anymore. However the rays $\RR_c(\th)$ and $\RR_c(\th')$ must land on the same point and that this point is (at least) bi-accessible. Since the geodesic polygon formed by the angles $(\al_1,\ldots,\al_p)$
partition the unit circle into at least $2$ intervals, and since $\Th$ is dense, the set of maximal elements has to be at least $2$.
Now consider the corresponding points $$B = \{ z \in J(f_c) : \text{ There exist an angle $\th \in A$ such that the ray $\RR_c(\th)$ lands at $z \in J(f_c)$} \}.$$
Clearly, $B$ is a closed connected subset of the Julia set. Note that if $B$ is trivial, i.e. consists of only one single point $w \in J(f_c)$, then the lemma follows, since this would mean that the maximal ray pairs are precisely the arcs $(\al_j, \al_{j+1})$. Hence we suppose to reach a contradiction, that $B$ is not just a single point.

We want to prove that $B$ in fact contains an iterated pre-image of the critical point, i.e. there is a $z \in E \cap B$. Take two points $a,b \in B$ which have the property that they are (at least) bi-accessible. We may choose them such that there are  maximal ray pairs $(\th_a,\th_a')$ and $(\th_b,\th_b')$ with angles in $A$, such that $\RR_c(\th_a)$ and $\RR_c(\th_a')$ land at $a$ and $\RR_c(\th_b), R_c(\th_b')$ land at $b$. If any of them, say $a$, is more than bi-accessible, i.e. there is a larger arc $(\th_1,\th_1')$ whose rays land at $a$ and such that $(\th_a,\th_a') \subset (\th_1,\th_1')$, then consider the largest such arc and call it $(\th_1, \th_1')$ for $a$ and correspondingly $(\th_2,\th_2')$ for $b$.  Now take an
equipotential $Q$ together with the rays $\RR_c(\th_1), \RR_c(\th_1'), \RR_c(\th_2), \RR_c(\th_2')$. These rays together with $Q$ cut out three (bounded) puzzle pieces; choose the one which meets both $a$ and $b$ and call it $P$. By non-normality, there is an $N$ such that $f_c^N(P)$ covers the whole Julia set and hence we must also have $f_c^N(J(f_c) \cap P) = J(f_c)$.

Since $B$ is connected and $J(f_c)$ is uniquely path-connected there is a path $\ga \subset B$ which connects $a$ and $b$. If $f_c^j(\ga)$ does
not meet the critical point for any $j \leq N$ then the map $f_c^N$ is univalent in a neighbourhood of $\ga$, or $f_c^N(\ga)$ creates a loop in
$J(f_c)$ which is impossible (since $J(f_C)$ is uniquely path connected).
The set $J(f_c) \sm f_c^N(\ga)$ is a disjoint union disjoint connected components (since the endpoints $f_c^N(a)$ and $f_c^N(b)$ are still at least bi-accessible).
Let $X$ be one of them meeting $f_c^N(\ga)$ at $f_c^N(a)$ or $f_c^N(b)$, and take some $y \in X$, $y \neq f_c^N(a), y \neq f_c^N(b)$. Let $x \in P \sm \ga$ be a pre-image inside $P$. Then there is a path $\ga' \subset J(f_c)$ connecting $x$ with
the closest point $x' \in \ga$.  Since $f_c^N$ is univalent on $\ga$ there is only one pre-image to $f_c^N(x')$ in $\ga$, namely $x'$ (of course). We have to have $x' \neq a,b$. To see this, suppose that $x'=a$ for instance. Then we claim that $\ga'$ would not lay inside $P$. The rays $\RR_c(\th_1), \RR_c(\th_1')$, by the definition of the angles $\th_1, \th_1'$, cut of all but one of the components of $J(f_c) \sm \{ a \}$. If $x'=a$ then $\ga'$ would not belong to $P$. Hence $x' \neq a$. The argument is similar for $b$ of course. So we have $x' \neq a,b$.

Moreover, $f_c^N(x')$ cannot be $f_c^N(a)$ or $f_c^N(b)$ since these points are images of $a$ and $b$ resp. But this means that $f_c^N(x')$ is connected to $f_c^N(x)$ with a path intersecting $f_c^N(\ga)$, where this intersection itself is a non-trivial path (i.e. consists of more than one point). Any point in $f_c^N(\ga)$ inside a small neighbourhood of $f_c^N(x')$ has to have two pre-images; one pre-image in
$\ga'$ which is disjoint from $\ga$ and also one pre-image in $\ga$. Both pre-images can be chosen arbitrarily close to $x'$ by continuity of $f_c^N$. But this contradicts the fact that $f_c^N$ is univalent in a neighbourhood of $\ga$. Hence there has to be a critical point on $f_c^j(\ga)$ for some $j \leq N$ and hence there has to be a point in $\ga$ (not equal to $a$ or $b$) that belong to $E$. The lemma is thereby proved.

\end{proof}

We now extend $\psi_c$ as follows. Let $z_k\raw z$, as $k \raw \infty$, where $z \in J(f_c)$ and $z_k \in E$. By the previous section $\psi_c(z_k)$ is well defined. We want to put
\[
\psi_c(z) = \lim\limits_{k \raw \infty} \psi_c(z_k).
\]
However, we have to show that this is independent of the choice of the sequence $z_k \in E$. First we note that if $z$ is uni-accessible then there is one single angle $\th$ for which the ray $\RR_c(\th)$ lands at $z$. If $z_k \raw z$ then all the angles of the rays landing at $z_k$ have to converge to $\th$. Suppose that the rays $\RR_+(-\th_k)$ and $\RR_+(-\th_k')$ land at $z_k$. Since  $\th_k, \th_k' \raw \th$ as $k \raw \infty$ we get immediately that $\psi_c(z_k) \raw \psi_c(z)$ by the continuity of $\psi_+$.

Let us assume that $z$ is multiply accessible. Let $\al_1, \ldots, \al_p$ be the angles of the rays landing at $z$. By lemma \ref{biac} there exist a sequence $z_{k,j} \in E$ such that the angles $(\th_{k,j},\th_{k,j}') \raw (\al_j, \al_{j+1})$ for $j = 1,\ldots, p-1$.
Let $w_{k,j}$ be the landing point of the ray $\RR_+(-\th_{k,j})$ and $w_{k,j}'$ the landing point of the ray $\RR_+(-\th_{k,j}')$. By the construction of $\psi_+$ we know that $\psi_+(w_{k,j}) = \psi_+(w_{k,j}')$.
Define
\begin{equation} \label{psic}
\psi_c(z) = \lim\limits_{k \raw \infty} \psi_+(w_{k,j}) = \lim\limits_{k \raw \infty} \psi_+(w_{k,j}').
\end{equation}
Now, repeat this argument with $j$ replaced by $j+1$, starting with $j=1$. This makes $\psi_c$ well defined, and independent of the choice of $j \in \{1,\ldots,p\}$ above. Since $\psi_c$ is constructed via $\psi_+$, the function $\psi_c$ is continuous.

\section{Ray equivalence and proof of Theorem \ref{main}}

The semi-conjugacies $\psi_+$ and $\psi_c$ are now constructed. What remains is to prove that they satisfy the ray equivalence relation.

Put $\psi_+=\psi_1$ and $\psi_c =\psi_2$. In this section we show that the maps $\psi_+$ and $\psi_c$ satisfy the ray equivalence relation:
\begin{equation} \label{equiv}
\psi_j(w) = \psi_k(z) \text{  if and only if   }  z \sim_r w,
\end{equation}
$j,k = 1,2$.

By construction, (\ref{equiv}) is satisfied if $z \in E$, $k=2$ and $w \in \BB iac_+$, $j=1$, where $\BB iac_+$ is the set of bi-accessible points in the basilica. Hence in the cases below we always suppose that any $z \in J(f_c)$ does not belong to $E$ and any $w \in J(f_{-1})$ is not bi-accessible.

Moreover, the construction of $\psi_+$ and $\psi_c$ almost immediately gives the first implication; $z \sim_r w$ implies $\psi_j(w) = \psi_k(z)$ for any $j,k=1,2$. In case 1 below we outline the arguments for $j=k=1$. For the other cases, when not $j=k=1$, we leave the details to the reader. The other implications also follow from the construction, but as obvious. 

{\bf Case 1. $j=k=1$} We first suppose that $w_1 \sim_r w_2$ (and both points lay in the basilica) and there are rays $\RR_+(t_1)$ which land on $w_1$ and $\RR_+(t_2)$ which lands at $w_2$ such that $\RR_c(-t_1) $ and $\RR_c(-t_2)$ land at the same point $z \in J(f_c)$. The fact that $\psi_+(w_1) = \psi_+(w_2)$ follows from (\ref{psic}). If there is a larger chain of rays connecting $w_1$ with $w_2$ the same argument applies finitely many times.

Now suppose $\psi_+(w_1) = \psi_+(w_2)$ and suppose that $w_1$ and $w_2$ are not ray equivalent to reach a contradiction. Let $\RR_+(t_1)$ and $\RR_+(t_2)$ be two rays landing at $w_1$ and $w_2$ respectively. Consider the rays $\RR_c(-t_1)$ and $\RR_c(-t_2)$ and suppose they land at two distinct points $z_1 \in J_c$ and $z_2 \in J_c$ respectively. Let $A=\{ \al_1, \ldots, \al_p\}$ and $B=\{\be_1, \ldots, \be_q\}$ be the angles for the rays landing at $z_1$ and $z_2$ respectively, and so $-t_1 \in A$ and $-t_2 \in B$. It is clear that some adjacent ray pair $(\al,\al' )$ in $A$ has to contain all angles in $B$ unless $z_1$ is uni-accessible, in which case we put $\al'=\al+1$ (regarding $S^1$ as $\R / \Z$). We may assume that at least one of $w_1$ or $w_2$ is a tip in the basilica, since otherwise $\psi_+(w_1)=\psi_+(w_2)$ only at certain pre-images of the $\al$-fixed point and these points are ray equivalent by construction. 
Let $z$ be a point in the basilica and let $\BB(t)$ be the (possibly finite) bubble ray in the basilica which, in the finite case, ends at a bubble containing $z$ on its boundary where $\RR_+(t)$ lands on $z$ or, in the infinite case, lands at the landing point of $\RR_+(t)$. In the finite case we also say that $\BB(t)$ lands at $z$. In the infinite case clearly $z$ is a tip. Consider the bubble rays $\BB_R(-\al') = \psi_+(\BB(-\al)), \BB_R(-\al') = \psi_+(\BB(-\al'))$ and $\BB_R(-\be) = \psi_+(\BB(-\be))$, where we may suppose that the latter is infinite for some $\be \in B$.

By Lemma \ref{biac} there are ray-pairs $(\th_k, \th_k')$ that converges to $(\al,\al')$. 
Let us suppose that $\al < \th_k < \th_k' < \al'$ and that $\al < \be < \al'$. For some $k_0$ we have to have $\th_k < \be < \th_k'$ for all $k \geq k_0$. Suppose that $\RR_+(-\th_k)$ and $\RR_+(-\th_k')$ land at $w_k$ and $w_k'$ respectively. Recall that $\psi_+(w_k) = \psi_+(w_k')$ by construction. If $\psi_+(w_1)=\psi_+(w_2)$ this means that $\BB_R(-\be)$ also lands at $\psi_+(w_1)$. 
Since $\al < \th_k < \be < \th_k' <\al'$, it follows that the finite (closed) bubble rays $\BB_R(-\th_k)$ and $\BB_R(-\th_k')$ enclose a closed region where $\BB_R(-\be)$ is trapped inside. So the only possibility that $\BB_R(-\be)$ lands at $\psi_+(w_1)$ is that $\psi_+(w_1) = \psi_+(w_k)=\psi_+(w_k')$, which is impossible by Lemma \ref{tips}, since $w_k$ and $w_k'$ are pre-images of $p_{\al}$. Hence $z_1=z_2$ and we are done.

{\bf Case 2. $j=1, k=2$}
Again, the implication $z \sim_r w$ implies $\psi_1(w) = \psi_2(z)$ follows automatically from the construction.

Let us therefore assume $\psi_c(w) = \psi_+(z)$, and not $z \sim_r w$. But then there are rays $\RR_c(t)$ landing at $z$ and $\RR_+(s)$ landing at $w$ such that $\psi_+(-t)$ lands at some $w' \neq w$ and $\RR_c(-s)$ lands at some $z' \neq z$ (otherwise $z \sim_r w$). But then $\psi_+(w) = \psi_+(w')$ and we can apply Case 1.

{\bf Case 3. $j=k=2$}.
Again, if $z_1 \sim_r z_2$ then $\psi_c(z_1) = \psi_c(z_2)$ follows from the construction.

Suppose now that $\psi_c(z_1) = \psi_c(z_2)$. We have to prove that $z_1 \sim_r z_2$. Suppose $z_1$ and $z_2$ are not ray equivalent. Then there are rays $\RR_c(t_j)$ landing at $z_j$, $j=1,2$ such that the rays $\RR_+(-t_j)$ land at two different points $w_1$ and $w_2$. But then $\psi_+(w_1) = \psi_+(w_2)$ so we can apply Case 1 again. This finishes the proof of Theorem \ref{main}.

\newpage

\bibliographystyle{plain}
\bibliography{ref}

\begin{thebibliography}{1}

\bibitem{Asp-Roesch}
{M}agnus {A}spenberg and {P}ascale {R}oesch.
\newblock {N}ewton maps as matings of cubic polynomials.
\newblock {\em Proc. London Math. Soc.}, 113:77--112, 2016 (1).

\bibitem{AY}
Magnus Aspenberg and Michael Yampolsky.
\newblock {M}ating non-renormalisable quadratic polynomials.
\newblock {\em Comm. Math. Phys.}, 287(1):1--40, 2009.

\bibitem{Dudko}
{D}imitry {D}udko.
\newblock {M}atings with laminations.
\newblock Preprint, arXiv: 1112.4780.

\bibitem{Lars}
{L}ars {P}edersen.
\newblock {A}n analysis of a shared mating in {$V_2$}.
\newblock Master Thesis, Ume\r{a} University, 2014.

\bibitem{MS}
Mitsuhiro Shishikura.
\newblock On a theorem of {M}. {R}ees for matings of polynomials.
\newblock In {\em The Mandelbrot set, theme and variations}, volume 274 of {\em
  London Math. Soc. Lecture Note Ser.}, pages 289--305. Cambridge Univ. Press,
  Cambridge, 2000.

\bibitem{TL}
Lei Tan.
\newblock Matings of quadratic polynomials.
\newblock {\em Ergodic Theory Dynam. Systems}, 12(3):589--620, 1992.

\bibitem{Timorin}
Vladlen Timorin.
\newblock The external boundary of {M}.
\newblock In {\em Holomorphic Dynamics and Renormalization: A Volume in Honour
  of John Milnor's 75th Birthday}, volume~53 of {\em Fields Institute
  Communicatios}, pages 225--268. AMS and the Fields Institute, 2008.

\end{thebibliography}

\end{document}